\documentclass[12pt]{amsart}

\usepackage{amsmath,amssymb,amsfonts,amscd,latexsym}
\usepackage[all]{xy}
\usepackage{graphics,epsfig,colortbl}
\input xy
\xyoption{all}

\usepackage{vmargin} 
\setmarginsrb{20mm}{10mm}{20mm}{16mm}{20mm}{7mm}{20mm}{8mm}

\usepackage[pdftex,bookmarks,colorlinks,breaklinks]{hyperref} 

\newtheorem{theorem}{Theorem}[section]

\newtheorem{proposition}[theorem]{Proposition}
\newtheorem{corollary}[theorem]{Corollary}
\newtheorem{lemma}[theorem]{Lemma}
\newtheorem{claim}{Claim}

\newtheorem{question}[theorem]{Question}

\theoremstyle{definition}
\newtheorem{definition}[theorem]{Definition}

\theoremstyle{remark}

\newtheorem{remark}[theorem]{Remark}


\newcommand{\T}{\mathbb{T}}

\newcommand{\N}{\mathbb{N}}

\newcommand{\LIM}{\mathbf{LIM}}

\newcommand{\hull}[1]{\langle #1 \rangle}

\def\c {{\mathfrak{c}}}

\title[$d$-independent topological groups]{Remarks on $d$-independent topological groups}

\author[Z. Huang]{Zhouxiang Huang}
\address{Institute of Mathematics, Nanjing Normal University, Nanjing 210024, China}
\email{2715383574@qq.com}

\author[D. Peng]{Dekui Peng\textsuperscript{*}}
\address{Institute of Mathematics, Nanjing Normal University, Nanjing 210024, China}
\email{pengdk10@lzu.edu.cn}
\thanks{*Corresponding author.}

\author[G. Zhang]{Gao Zhang}
\address{ School of Mathematical Sciences, Jiangsu Second Normal University, 211222, China}
\email{gaozhang0810@hotmail.com}

\begin{document}

\begin{abstract}
A non-trivial topological group is called \emph{$d$-independent} if for every subgroup of cardinality less than the continuum there exists a countable dense subgroup intersecting it trivially.
This notion was introduced by Márquez and Tkachenko and has been intensively studied in the metrizable setting.
In particular, they proved that a second-countable locally compact abelian group is $d$-independent if and only if it is algebraically an $M$-group, and asked whether the same conclusion holds for all separable locally compact groups.

In this paper we give an affirmative answer to this question.
We show that every separable locally compact abelian $M$-group is $d$-independent, thereby removing the metrizability assumption from the result of Márquez and Tkachenko.

In addition, we investigate several further aspects of $d$-independence.
We study its behaviour under taking powers of topological groups and extend the notion of $d$-independence to the non-abelian setting.
Moreover, we prove that every separable connected compact group is $d$-independent, thereby answering another question posed by Márquez and Tkachenko.

\end{abstract}

\subjclass[2020]{22D05, 54H11, 54D65, 22A05} \keywords{$d$-independent group, topological group, $M$-group}
\maketitle

\section{Introduction}
\label{sec0}

Separability is without doubt one of the most fundamental and widely studied notions in general topology. 
Nevertheless, separability behaves poorly in general: even a closed subspace of a separable compact space need not be separable.
In contrast, the situation improves substantially in the setting of topological groups.
It is well known that every closed subgroup of a separable locally compact group is separable \cite{CI}.
This result was first established by Itzkowitz in the compact case \cite{It}, and later generalized by Leiderman, Morris and Tkachenko, who proved that every closed subgroup of a separable feathered group is separable \cite{LMT}.

Motivated by these results, Leiderman and Tkachenko introduced and studied the class $\mathbf{CSS}$ of topological groups in which all closed subgroups are separable \cite{LT}.
Closely related problems have also been considered from a different perspective; for instance, the second listed author investigated topological groups whose dense subgroups are all separable \cite{Pe}.

In a subsequent development, Xiao, Sánchez and Tkachenko \cite{XST} observed an intriguing phenomenon.
They showed that for certain standard topological groups $G$, and for every subgroup $S\leq G$ of cardinality less than $\mathfrak{c}$, there exists a countable dense subgroup $D$ of $G$ such that $D\cap S=\{1\}$.
This property was further studied systematically by Márquez and Tkachenko in \cite{MT}, where such groups were termed \emph{$d$-independent topological groups}.

\begin{definition}A non-trivial topological group $G$ is called \emph{$d$-independent}, if for any subgroup $S\leq G$ with $|S|<\c$, there exists a countable dense subgroup $D$ of $G$ such that $S\cap D=\{1\}$.\end{definition}

Among other results, Márquez and Tkachenko proved that a second-countable topological group is $d$-independent if and only if it is \emph{maximally fragmentable}, a notion introduced earlier by Comfort and Dikranjan in their study of dense subgroups of topological groups.
They also obtained a complete characterization of $d$-independence within the class of second-countable locally compact abelian groups.
More precisely, they showed that:

\begin{quote}
A second-countable locally compact abelian group is $d$-independent if and only if it is algebraically an $M$-group.
\end{quote}

Here, an abelian group $G$ is called an \emph{$M$-group} if for every integer $n$, the cardinality of $nG$ is either $1$ or at least $\mathfrak{c}$.
This class of groups was introduced earlier by Dikranjan and Shakhmatov \cite{DS} in connection with a long-standing problem of Markov.
It is worth noting that being an $M$-group is a necessary condition for an abelian topological group to be $d$-independent.

In \cite{MT}, the sufficiency of this condition was established under the assumption of second countability, and the proof relies heavily on techniques from the theory of spaces with countable tightness.
However, every compact group of countable tightness is metrizable, which explains why their method applies only in the metrizable setting.
This led Márquez and Tkachenko to pose the following question:

\begin{question}
Is a separable locally compact group $d$-independent whenever it is an $M$-group?
\end{question}

The main result of the present paper, proved in Section~\ref{S2}, gives an affirmative answer to this question.
Our approach is largely independent of the methods developed in \cite{MT} and, moreover, yields several additional generalizations of their results.

In the final section of the paper, we discuss two further aspects of $d$-independence.
First, we consider the behaviour of $d$-independence under taking powers.
Secondly, we answers another question of Márquez and Tkachenko by showing that every separable connected compact group is $d$-independent.

\subsection{Notation, terminology and preliminary facts}

Throughout this paper, all topological groups are assumed to be Hausdorff.
We use the symbols $\mathbb{Z}$, $\mathbb{Q}$, $\mathbb{R}$ and $\mathbb{T}$
to denote the additive group of integers, the additive group of rational numbers,
the additive group of real numbers, and the unit circle group, respectively.
The sets of natural numbers and positive integers are denoted by $\N$ and $\N_{+}$.

For each $n\in \N_{+}$, we denote by $\mathbb{Z}(n)$ the cyclic group of order $n$,
and by $\mathbb{Z}(p^\infty)$ the Prüfer $p$-group, where $p$ is a prime.
The group of $p$-adic integers, which is the Pontryagin dual of
$\mathbb{Z}(p^\infty)$, is denoted by $\mathbb{Z}_p$.

The cardinality of a set $A$ is denoted by $|A|$.
The symbols $\omega$ and $\c$ stand for the cardinals $|\mathbb{N}|$ and
$|\mathbb{R}|$, respectively.

Abelian groups are written additively.
The identity element of an abelian group $G$ is denoted by $0_G$, or simply by
$0$ when no confusion may arise.
Non-abelian groups are considered only in Section~\ref{S3}; in this case, the
identity element of a group $G$ is denoted by $1_G$ or $1$.
For a subset $X\subseteq G$, the subgroup of $G$ generated by $X$ is denoted by
$\hull{X}$.

An element $g$ of a group $G$ is said to be of \emph{finite order}, or equivalently,
a \emph{torsion element}, if $g^n=1_G$ for some $n\in\N_{+}$.
A group $G$ is called a \emph{torsion group} if all its elements are torsion.
If $G$ has no torsion elements other than the identity, then $G$ is called
\emph{torsion-free}.
A torsion group is said to be \emph{bounded} if there exists a positive integer
$m$ such that $g^m=1_G$ for all $g\in G$.


A family $\{H_i: i\in I\}$ of subgroups of an abelian group $G$, where the index
set $I$ is well ordered, is called \emph{independent} if
\[
H_i\cap \sum_{j\in I\setminus\{i\}} H_j=\{0\}
\quad\text{for every } i\in I.
\]
It is easy to verify that this condition is equivalent to
\[
H_i\cap \sum_{j<i} H_j=\{0\}
\quad\text{for every } i\in I,
\]
and also equivalent to the fact that the sum $\sum_{i\in I} H_i$ is direct.
A family $\{H_i:i\in I\}$ of subgroups is called \emph{pairwise independent} if $\{H_i, H_j\}$ is independent, or equivalently, if $H_i\cap H_j=\{0\}$,  for any distinct $i,j\in I$.

For a subset $X\subseteq G$, we say that $X$ consists of \emph{independent elements}
if the family $\{\hull{x}: x\in X\}$ is independent.

The following elementary but useful lemma concerns independent families of
subgroups.

\begin{lemma}\label{indfam}
Let $G$ be an abelian group and let $H$ and $K$ be subgroups of $G$ such that
$H\cap K=\{0\}$.
Let $\mathcal{A}$ be an independent family of subgroups of $H$.
For each $A\in \mathcal{A}$, fix a homomorphism $f_A\colon A\to K$, and define
\[
B_A=\{a+f_A(a): a\in A\}.
\]
Then the family $\mathcal{B}=\{B_A: A\in \mathcal{A}\}$ is independent.
\end{lemma}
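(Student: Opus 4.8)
The plan is to represent each $B_A$ as the graph of $f_A$ and to exploit the coordinate projection of $H+K$ onto $H$. First I would observe that, since $H\cap K=\{0\}$ and $G$ is abelian, the sum $H+K$ is direct; consequently there is a well-defined homomorphism $\pi\colon H+K\to H$ with $\pi(h+k)=h$ for all $h\in H$ and $k\in K$. Each $B_A=\{a+f_A(a):a\in A\}$ is a subgroup of $H+K$, being the image of $A$ under the homomorphism $a\mapsto a+f_A(a)$; and since $a\in A\subseteq H$ while $f_A(a)\in K$, we have $\pi(a+f_A(a))=a$. Thus $\pi$ restricts to a homomorphism $B_A\to A$, and this restriction is injective because $a+f_A(a)=0$ forces $a=0$ by directness of $H+K$.

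I would then verify the independence of $\mathcal{B}$ via the equivalent direct-sum formulation recalled in the excerpt: it suffices to show that if $\sum_{A\in F}b_A=0$ for some finite $F\subseteq\mathcal{A}$ and $b_A\in B_A$, then every $b_A=0$. Writing $b_A=a_A+f_A(a_A)$ with $a_A\in A$ and applying $\pi$ to this relation yields $\sum_{A\in F}a_A=0$ in $H$. Since $\mathcal{A}$ is independent, the sum $\sum_{A\in\mathcal{A}}A$ is direct, so $a_A=0$ for each $A\in F$; but then $b_A=a_A+f_A(a_A)=0$ as well, which is exactly what is needed.

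I do not anticipate a genuine obstacle here, as the statement is elementary once the right viewpoint is fixed. The only points demanding care are the well-definedness of $\pi$ — which is precisely where the hypothesis $H\cap K=\{0\}$ enters — and the injectivity of $\pi$ on each $B_A$. With the graph description of $B_A$ and the projection $\pi$ isolated, the independence of $\mathcal{B}$ reduces immediately to that of the given family $\mathcal{A}$.
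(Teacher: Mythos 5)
Your proof is correct and follows essentially the same route as the paper: the paper splits a vanishing finite sum $\sum_i\bigl(a_i+f_{A_i}(a_i)\bigr)=0$ into its $H$- and $K$-components using $H\cap K=\{0\}$ and then invokes the independence of $\mathcal{A}$, which is exactly what your projection $\pi\colon H\oplus K\to H$ accomplishes in slightly more formal dress. The only cosmetic difference is that you package the use of $H\cap K=\{0\}$ into the well-definedness of $\pi$ rather than applying it directly to the sum.
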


\begin{proof}
Let $A_1,\dots,A_n\in \mathcal{A}$ and choose $a_i\in A_i$ for each
$i=1,\dots,n$.
Assume that
\[
\sum_{i=1}^n \bigl(a_i+f_{A_i}(a_i)\bigr)=0.
\]
Since $a_i\in H$ and $f_{A_i}(a_i)\in K$, and $H\cap K=\{0\}$, we obtain
\[
\sum_{i=1}^n a_i=0
\qquad\text{and}\qquad
\sum_{i=1}^n f_{A_i}(a_i)=0.
\]
As the family $\mathcal{A}$ is independent, the first equality implies that
$a_i=0$ for all $i=1,\dots,n$.
This completes the proof.
\end{proof}

Undefined notions and symbols related to topological groups can be found in
\cite{ADG,AT}.

\section{$d$-Independent locally compact abelian groups}\label{S2}

We first present a simple lemma characterizing $d$-independent abelian groups in terms of independent dense subgroups.

\begin{lemma}\label{Le:eq}
Let $G$ be a separable abelian topological group. Then the following are equivalent:
\begin{itemize}
\item[(1)] $G$ is $d$-independent;
\item[(2)] $G$ admits a family $\{D_\alpha:\alpha<\mathfrak c\}$ of independent countable dense subgroups;
\item[(3)] $G$ admits a family $\{D_\alpha:\alpha<\mathfrak c\}$ of pairwise independent countable dense subgroups.
\end{itemize}
\end{lemma}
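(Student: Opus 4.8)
The plan is to prove the cycle $(1)\Rightarrow(2)\Rightarrow(3)\Rightarrow(1)$. The step $(2)\Rightarrow(3)$ is immediate from the definitions: if $\{D_\alpha:\alpha<\c\}$ is independent, then for distinct $\alpha,\beta$ one has $D_\alpha\cap D_\beta\sub D_\alpha\cap\sum_{\gamma\neq\alpha}D_\gamma=\{0\}$, so the family is pairwise independent.

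For $(1)\Rightarrow(2)$ I would construct the family by transfinite recursion on $\alpha<\c$. Suppose countable dense subgroups $D_\beta$ have already been chosen for all $\beta<\alpha$ in such a way that $\{D_\beta:\beta<\alpha\}$ is independent, and set $S_\alpha=\sum_{\beta<\alpha}D_\beta$. Since each $D_\beta$ is countable and $|\alpha|<\c$ (because $\c$ is an initial ordinal), the subgroup $S_\alpha$ is generated by a set of cardinality $\le\max(|\alpha|,\omega)<\c$, whence $|S_\alpha|<\c$. Applying the hypothesis that $G$ is $d$-independent to the subgroup $S_\alpha$ produces a countable dense subgroup $D_\alpha$ with $S_\alpha\cap D_\alpha=\{0\}$. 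By the equivalent description of independence recorded in the preliminaries, that a well ordered family $\{H_i\}$ is independent precisely when $H_i\cap\sum_{j<i}H_j=\{0\}$ for every $i$, the enlarged family $\{D_\beta:\beta\le\alpha\}$ is again independent. Carrying the recursion through all $\alpha<\c$ yields the required independent family of countable dense subgroups.

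For $(3)\Rightarrow(1)$, let $\{D_\alpha:\alpha<\c\}$ be a pairwise independent family of countable dense subgroups and let $S\le G$ satisfy $|S|<\c$. I claim that $S$ meets some $D_\alpha$ trivially. If not, choose for each $\alpha$ a nonzero element $s_\alpha\in S\cap D_\alpha$. Whenever $\alpha\neq\beta$, the elements $s_\alpha$ and $s_\beta$ are distinct, for a common value would lie in $D_\alpha\cap D_\beta=\{0\}$ and hence be $0$, contradicting $s_\alpha\neq0$. Thus $\alpha\mapsto s_\alpha$ embeds $\c$ into $S$, forcing $|S|\ge\c$, a contradiction. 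Therefore some $D_\alpha$ satisfies $S\cap D_\alpha=\{0\}$, and being a countable dense subgroup it witnesses the $d$-independence of $G$.

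The whole argument is essentially a matter of organization rather than a genuine difficulty. The one point deserving attention is the cardinal bound $|S_\alpha|<\c$ in the recursion of $(1)\Rightarrow(2)$, which rests on $\c$ being a cardinal, so that every proper initial segment of it, together with a countable set, still has cardinality below $\c$; everything else follows directly from the definitions and the elementary characterization of independent families.
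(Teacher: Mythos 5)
Your proposal is correct and follows essentially the same route as the paper's own proof: transfinite recursion using the cardinality bound $\bigl|\sum_{\beta<\alpha}D_\beta\bigr|<\mathfrak{c}$ for (1)$\Rightarrow$(2), the trivial implication (2)$\Rightarrow$(3), and the observation that pairwise independence lets each nonzero element of $S$ lie in at most one $D_\alpha$ for (3)$\Rightarrow$(1). The only difference is that you spell out the cardinal arithmetic and the injectivity argument in slightly more detail than the paper does, which is harmless.
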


\begin{proof}
(1)$\Rightarrow$(2).
Assume that $G$ is $d$-independent and fix a countable dense subgroup $D_0$ of $G$.
By transfinite induction, for each ordinal $\alpha<\mathfrak c$ we choose a countable dense subgroup $D_\alpha\leq G$ such that
\[
D_\alpha\cap \sum_{\beta<\alpha} D_\beta=\{0\}.
\]
This is possible since $G$ is $d$-independent and
\(
\bigl|\sum_{\beta<\alpha} D_\beta\bigr|<\mathfrak c.
\)
It follows that $\{D_\alpha:\alpha<\mathfrak c\}$ is a family of independent countable dense subgroups of $G$.

\medskip
(2)$\Rightarrow$(3).
This implication is immediate.

\medskip
(3)$\Rightarrow$(1).
Let $\{D_\alpha:\alpha<\mathfrak c\}$ be a family of pairwise independent countable dense subgroups of $G$, and let $S\leq G$ with $|S|<\mathfrak c$.
Since the subgroups $D_\alpha$ are pairwise independent, each nonzero element $s\in S$ belongs to at most one $D_\alpha$.
Hence there exists some $\kappa<\mathfrak c$ such that
\[
D_\kappa\cap S=\{0\},
\]
showing that $G$ is $d$-independent.
\end{proof}

\begin{theorem}\label{Th:eq}
A separable abelian group $G$ with a torsion-free $d$-independent subgroup
is itself $d$-independent.
\end{theorem}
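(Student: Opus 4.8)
The plan is to use Lemma~\ref{Le:eq} and reduce the problem to constructing a family $\{B_\alpha:\alpha<\c\}$ of pairwise independent countable dense subgroups of $G$. Let $H\le G$ be the given torsion-free $d$-independent subgroup and fix a countable dense subgroup $D=\{d_n:n\in\omega\}$ of $G$. First I would record the only global input about $H$: since $H$ is $d$-independent, taking $S=H$ in the definition forces $|H|\ge\c$, and as $H$ is torsion-free it embeds into the $\Q$-vector space $H\otimes\Q$, whose cardinality is $\max(\mathrm{rank}\,H,\omega)$; hence $\mathrm{rank}\,H\ge\c$. Each $B_\alpha$ will be a graph $B_\alpha=\{a+f_\alpha(a):a\in A_\alpha\}$ of a homomorphism $f_\alpha\colon A_\alpha\to D$, where $A_\alpha\le H$ is a carefully chosen countable dense subgroup; density of $B_\alpha$ will come from a kernel that is dense in $H$, while independence will come from an algebraic transversality between the $A_\alpha$ and $D$.

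The heart of the argument is a transfinite construction of subgroups $A_\alpha=M_\alpha\oplus T_\alpha$ for $\alpha<\c$. Write $\Sigma_{<\alpha}=\sum_{\beta<\alpha}A_\beta$, a subgroup of size $<\c$. Using that $H$ is $d$-independent, I would choose $M_\alpha$ to be a countable dense subgroup of $H$ with $M_\alpha\cap\bigl(D+\Sigma_{<\alpha}\bigr)=\{0\}$ (legitimate because $(D+\Sigma_{<\alpha})\cap H$ has cardinality $<\c$). Using $\mathrm{rank}\,H\ge\c$, the $\Q$-span of $M_\alpha\cup D\cup\Sigma_{<\alpha}$ has dimension $<\c$, so I can pick elements $t_{\alpha,n}\in H$ ($n\in\omega$) whose images are $\Q$-linearly independent modulo that span, and set $T_\alpha=\bigoplus_{n}\Z t_{\alpha,n}$. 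A routine check then gives $A_\alpha\cap(D+\Sigma_{<\alpha})=\{0\}$; in particular the family $\{A_\alpha\}$ is independent and satisfies the transversality condition $\bigl(\bigoplus_\alpha A_\alpha\bigr)\cap D=\{0\}$. Finally I would define $f_\alpha\colon A_\alpha\to D$ by $f_\alpha|_{M_\alpha}=0$ and $f_\alpha(t_{\alpha,n})=d_n$; this is well defined since $T_\alpha$ is free and the sum is direct, it is surjective, and its kernel contains the dense subgroup $M_\alpha$.

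It then remains to verify the two properties of $B_\alpha=\{a+f_\alpha(a):a\in A_\alpha\}$. For density, given $d\in D$ I choose $a_0\in A_\alpha$ with $f_\alpha(a_0)=d$; then $d+a_0+M_\alpha\subseteq B_\alpha$, and since $M_\alpha$ is dense in $H$ we have $-a_0\in H\subseteq\overline{M_\alpha}$, whence $d\in\overline{B_\alpha}$. As $D$ is dense this yields $\overline{B_\alpha}=G$. For pairwise independence, if $x=a+f_\alpha(a)=b+f_\beta(b)\in B_\alpha\cap B_\beta$ with $\alpha\ne\beta$, then $a-b=f_\beta(b)-f_\alpha(a)\in D$ while $a-b\in A_\alpha\oplus A_\beta$; transversality forces $a=b$, hence $a=b\in A_\alpha\cap A_\beta=\{0\}$ and $x=0$. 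This is precisely the computation of Lemma~\ref{indfam}, with the transversality of $\bigoplus_\gamma A_\gamma$ and $D$ playing the role of the hypothesis $H\cap K=\{0\}$. By Lemma~\ref{Le:eq}, the existence of $\{B_\alpha:\alpha<\c\}$ shows that $G$ is $d$-independent.

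The step I expect to be the main obstacle is the simultaneous control required in the recursion: the kernels $M_\alpha$ must be topologically dense in $H$ (supplied by the $d$-independence of $H$), while the free parts $T_\alpha$ and the whole family must be kept algebraically independent and transverse to $D$ (supplied by $\mathrm{rank}\,H\ge\c$). Reconciling these topological and algebraic demands at every stage — and then checking that the resulting discontinuous graphs $B_\alpha$ are nonetheless dense in the possibly non-metrizable group $G$ — is exactly where the torsion-freeness and $d$-independence of $H$ are essential.
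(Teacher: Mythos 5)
Your proposal is correct and follows essentially the same route as the paper's proof: a transfinite recursion driven by the $d$-independence of $H$ produces countable dense subgroups of $H$ transverse to a fixed countable dense subgroup of $G$, each is enlarged by a free abelian group of rank $\omega$, and graphs of surjective homomorphisms onto that countable dense subgroup (with kernel containing the dense-in-$H$ part) give countable dense subgroups of $G$ whose independence follows from Lemma~\ref{indfam}, so Lemma~\ref{Le:eq} applies. The only difference is where the free rank-$\omega$ piece comes from: you build $T_\alpha$ from fresh elements of $H$ using $\mathrm{rank}\,H\ge\mathfrak{c}$ (justified by your preliminary observation that $|H|\ge\mathfrak{c}$, and best read inside $G\otimes\mathbb{Q}$, into which the torsion-free $H$ embeds, since your spanning set involves $D\not\subseteq H$), whereas the paper extracts it from the family itself, taking $P_\gamma=\sum_{n\ge 1}\langle b_{\gamma+n}\rangle$ with $b_{\gamma+n}\in D_{\gamma+n}$ and using torsion-freeness plus independence to see this sum is free --- an immaterial variation.
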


\begin{proof}
Let $H$ be a torsion-free $d$-independent subgroup of $G$.
Since $G$ is separable, there exists a dense countable subgroup $K$ of $G$.
Set $S=K\cap H$, which is countable.

As $H$ is $d$-independent, we can choose a dense countable subgroup
$D_0$ of $H$ such that $S\cap D_0=\{0_G\}$.
By transfinite induction, for each ordinal $\alpha<\mathfrak{c}$ we may select
a dense countable subgroup $D_\alpha$ of $H$ such that
\[
D_\alpha\cap\Bigl(S+\sum_{\beta<\alpha}D_\beta\Bigr)=\{0_G\}.
\]
Hence the family
\[
\mathcal{A}:=\{D_\alpha:\alpha<\mathfrak{c}\}
\]
is independent and $A\cap K=\{0_G\}$, where $A=\bigoplus_{\alpha<\c} D_\alpha$.

Let $\mathbf{LIM}$ denote the set of all limit ordinals below $\mathfrak{c}$.
For each $\alpha<\mathfrak{c}$, choose a nonzero element
$b_\alpha\in D_\alpha$.
For every $\gamma\in\mathbf{LIM}$, define
\[
P_\gamma=\sum_{n\in\omega\setminus\{0\}}\langle b_{\gamma+n}\rangle.
\]
%
By the construction of the family $\{D_\alpha:\alpha<\mathfrak{c}\}$, this sum
is direct; hence $P_\gamma$ is a free abelian group of rank $\omega$.

Fix, for each $\gamma\in\mathbf{LIM}$, a surjective homomorphism
$f_\gamma\colon D_\gamma\oplus P_\gamma\to K$ such that
$D_\gamma\leq\ker f_\gamma$, and set
\[
Q_\gamma=\{x+f_\gamma(x):x\in D_\gamma\oplus P_\gamma\}.
\]

We claim that each $Q_\gamma$ is dense in $G$.
Indeed, $Q_\gamma\cap H$ is dense in $H$ since it contains $D_\gamma$.
Therefore $\overline{H}\leq\overline{Q_\gamma}$, and in particular
$D_\gamma\oplus P_\gamma\leq\overline{Q_\gamma}$.
Thus, for every $x\in D_\gamma\oplus P_\gamma$,
\[
f_\gamma(x)=(x+f_\gamma(x))-x\in Q_\gamma+\overline{Q_\gamma}
\subseteq\overline{Q_\gamma}.
\]
As $f_\gamma$ is surjective, it follows that $K\leq\overline{Q_\gamma}$.
Since $K$ is dense in $G$, we conclude that $Q_\gamma$ is dense in $G$.

The independence of the family
$\{Q_\gamma: \gamma\in \LIM\}$
follows directly from Lemma~\ref{indfam}, as $Q_\gamma\leq A$.
Therefore, by Lemma~\ref{Le:eq}, the group $G$ is $d$-independent.
\end{proof}

\begin{lemma}\label{infip}
Let $G$ be an unbounded totally disconnected compact abelian group.
If $G$ contains no subgroup topologically isomorphic to $\mathbb{Z}_p$ for any prime $p$,
then $G$ has a closed subgroup of the form
\[
\prod_{i\in \omega} \mathbb{Z}(p_i),
\]
where $\{p_i : i\in \omega\}$ is a family of pairwise distinct primes.
\end{lemma}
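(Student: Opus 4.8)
The plan is to reduce to the $p$-primary components of $G$ and then analyse a single pro-$p$ group. Since $G$ is compact and totally disconnected, it is profinite and therefore splits as a topological product $G\cong\prod_p G_p$ of its pro-$p$ Sylow subgroups, where $p$ ranges over all primes. Because $\Zp$ is compact and pro-$p$, any continuous homomorphism from $\Zp$ into the pro-$q$ group $G_q$ with $q\neq p$ has image simultaneously pro-$p$ and pro-$q$, hence trivial; consequently a closed subgroup of $G$ topologically isomorphic to $\Zp$ must lie entirely inside the factor $G_p$. Thus the hypothesis passes to each Sylow factor: no $G_p$ contains a closed copy of $\Zp$.

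The technical heart, and the step I expect to be the main obstacle, is the following single-prime claim: \emph{if a pro-$p$ abelian group $M$ contains no closed copy of $\Zp$, then $M$ is bounded.} To prove it I would argue by a dichotomy on the orders of elements of $M$. If $M$ has an element $x$ of infinite order, then $\overline{\langle x\rangle}$ is a monothetic, hence procyclic, pro-$p$ group; a procyclic pro-$p$ group is either finite or topologically isomorphic to $\Zp$, and since $x$ has infinite order the latter holds, contradicting the assumption on $M$. Hence every element of $M$ has finite ($p$-power) order, so $M=\bigcup_{n\in\omega}M[p^n]$ is an increasing union of the closed subgroups $M[p^n]=\ker(p^n\colon M\to M)$. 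As $M$ is compact and therefore Baire, some $M[p^N]$ has nonempty interior and is thus an open, finite-index subgroup; if $p^r$ denotes the exponent of the finite quotient $M/M[p^N]$, then $p^{N+r}M=\{0\}$, so $M$ is bounded.

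It remains to assemble these facts. By the claim each $G_p$ is bounded. Were only finitely many $G_p$ nontrivial, $G$ would be a finite product of bounded groups and hence bounded, contrary to hypothesis; so $G_{p_i}\neq\{0\}$ for infinitely many pairwise distinct primes $p_i$. Each nontrivial bounded $p_i$-group $G_{p_i}$ contains an element of order exactly $p_i$ --- take any nonzero $x$, of order $p_i^{k}$ say, and pass to $p_i^{k-1}x$ --- so it has a (finite, hence closed) subgroup $K_i\cong\mathbb{Z}(p_i)$. The product $\prod_i K_i$ is then a closed subgroup of $\prod_i G_{p_i}\leq G$ topologically isomorphic to $\prod_{i\in\omega}\mathbb{Z}(p_i)$, as required.
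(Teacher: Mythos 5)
Your proof is correct, but it takes a genuinely different route from the paper. The paper works entirely on the dual side: it passes to the Pontryagin dual $X=\widehat{G}$, which is an unbounded torsion discrete group, splits the analysis according to whether some primary component $X_p$ is unbounded or infinitely many $X_p$ are non-trivial, and rules out the first case using basic subgroups (Fuchs' theorem): an unbounded $X_p$ always admits a quotient isomorphic to $\mathbb{Z}(p^\infty)$, which dualizes to a closed copy of $\mathbb{Z}_p$ inside $G$, contradicting the hypothesis; in the remaining case each non-trivial $X_p$ is bounded, hence has a $\mathbb{Z}(p)$ quotient, and dualizing $\bigoplus_{i}\mathbb{Z}(p_i)$ gives the desired closed subgroup $\prod_i \mathbb{Z}(p_i)$. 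You instead stay on the group side throughout: you use the Sylow decomposition $G\cong\prod_p G_p$ of a profinite abelian group, and your key claim --- that a pro-$p$ abelian group with no closed copy of $\mathbb{Z}_p$ is bounded --- is proved by the procyclic-closure observation (an element of infinite order topologically generates $\mathbb{Z}_p$) together with a Baire category argument forcing some torsion layer $M[p^N]$ to be open of finite index. Your claim is precisely the dual of the paper's elimination of case (1), since $G_p$ is unbounded exactly when $X_p$ is, but the mechanisms are disjoint: the paper trades topology for the classical combinatorics of discrete torsion groups (pure/basic subgroups, divisible quotients), while you trade that algebra for topological inputs (Sylow decomposition of profinite groups, Baire category, classification of procyclic pro-$p$ groups). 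Both proofs then finish identically, extracting one $\mathbb{Z}(p_i)$ from each of infinitely many bounded non-trivial primary pieces. Your version has the merit of needing no duality theory at all; the paper's version avoids the profinite structure theory and the Baire argument, and its duality framework is reused elsewhere in the paper (e.g.\ in Corollary~\ref{unbounded}), which is presumably why the authors chose it.
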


\begin{proof}
Let $X$ be the Pontryagin dual group of $G$.
Then $X$ is a torsion abelian group and is not bounded.
There are two possible cases:

\begin{enumerate}
\item[(1)] There exists a prime $p$ such that the $p$-primary component $X_p$ of $X$ is unbounded;
\item[(2)] There exist infinitely many primes $p$ such that $X_p$ is non-trivial.
\end{enumerate}

We first show that case~(1) cannot occur.

Assume towards a contradiction that $X_p$ is unbounded for some prime $p$.
By a classical theorem of Fuchs \cite[Theorem~32.3]{Fuc},
the group $X_p$ admits a \emph{basic subgroup} $B_p$; that is,
$B_p$ is a pure subgroup of $X_p$ which is a direct sum of cyclic groups
and such that $X_p/B_p$ is divisible.

If $X_p/B_p$ is non-trivial, then it contains a subgroup isomorphic to the Prüfer group
$\mathbb{Z}(p^\infty)$.
Consequently, $X$, and hence $X_p$, admits a quotient group isomorphic to $\mathbb{Z}(p^\infty)$.
By Pontryagin duality, this implies that $G$ contains a closed subgroup
topologically isomorphic to $\mathbb{Z}_p$, contradicting the hypothesis.

Therefore, $X_p=B_p$ is itself a direct sum of cyclic groups.
Write
\[
X_p=\bigoplus_{s\in S} Y_s,
\]
where each $Y_s$ is cyclic.
Since each $Y_s$ embeds into $\mathbb{Z}(p^\infty)$ and $X_p$ is their direct sum,
there exists a homomorphism from $X_p$ into $\mathbb{Z}(p^\infty)$ extending all these embeddings.
As $X_p$ is unbounded, this homomorphism must be surjective.
Hence $\mathbb{Z}(p^\infty)$ is again a quotient group of $X_p$, and therefore of $X$.

Dualizing once more, we conclude that $G$ contains a closed subgroup
topologically isomorphic to $\mathbb{Z}_p$, a contradiction.

For case~(2), choose pairwise distinct primes $\{p_i:i\in\omega\}$ such that
$X_{p_i}\neq\{0\}$ for each $i$.
Since case~(1) does not occur, every $X_{p_i}$ is bounded.
Hence each $X_{p_i}$ admits a quotient group isomorphic to $\mathbb{Z}(p_i)$.
It follows that $X$ admits a quotient group isomorphic to
\[
\bigoplus_{i\in\omega}\mathbb{Z}(p_i).
\]
By Pontryagin duality, $G$ therefore contains a closed subgroup
topologically isomorphic to
$\prod_{i\in\omega}\mathbb{Z}(p_i).$
\end{proof}

\begin{corollary}\label{unbounded} 
A separable abelian group $G$ with an unbounded compact subgroup is itself
$d$-independent.
\end{corollary}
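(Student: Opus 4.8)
The plan is to verify $d$-independence through Lemma~\ref{Le:eq}, i.e.\ to produce a family $\{Q_\gamma:\gamma<\c\}$ of independent countable dense subgroups of $G$. Let $C$ be an unbounded compact abelian subgroup of $G$; being closed in a separable group, $C$ is itself separable. Fix once and for all a countable dense subgroup $K$ of $G$. The overall strategy imitates the proof of Theorem~\ref{Th:eq}: first locate inside $C$ a \emph{second-countable} closed subgroup $W$ that is already known to be $d$-independent, then build an independent family of countable dense subgroups of $W$ and \emph{graft} $K$ onto them via suitable homomorphisms, using Lemma~\ref{indfam} to preserve independence and the density of the $W$-subgroups to force density in $G$.

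First I would isolate $W$ by cases. If $C$ contains a closed subgroup topologically isomorphic to $\Zp$ for some prime $p$, then $\Zp$ is torsion-free and, being a second-countable locally compact $M$-group, is $d$-independent by the metrizable case of Márquez--Tkachenko; thus $G$ has a torsion-free $d$-independent subgroup and Theorem~\ref{Th:eq} finishes the proof immediately. Otherwise I split according to connectedness. If $C$ is totally disconnected, then (being unbounded) Lemma~\ref{infip} furnishes a closed subgroup $W=\prod_{i\in\omega}\mathbb{Z}(p_i)$ with the $p_i$ pairwise distinct. If instead the identity component of $C$ is nontrivial, the structure theory of connected compact abelian groups produces inside it a closed subgroup isomorphic either to $\Zp$ (already treated) or to $W=\T$: indeed, any nonzero character gives a surjection of the discrete dual onto a rank-one torsion-free group, whose dual is $\T$ or a solenoid, and the latter contains $\Zp$. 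In the remaining cases $W$ is a second-countable locally compact $M$-group, hence $d$-independent, and has torsion-free rank $\c$.

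It then remains to propagate $d$-independence from a possibly non-torsion-free $W$ to $G$, by re-running the argument of Theorem~\ref{Th:eq} with torsion-freeness replaced by a supply of independent elements of infinite order. Put $S=K\cap W$, which is countable. By transfinite recursion choose countable dense subgroups $D_\alpha\le W$ ($\alpha<\c$) together with elements $b_\alpha\in D_\alpha$ of infinite order so that
\[
D_\alpha\cap\Bigl(S+\sum_{\beta<\alpha}D_\beta\Bigr)=\{0\};
\]
then $\{D_\alpha\}$ is independent, $A:=\bigoplus_{\alpha<\c}D_\alpha$ satisfies $A\cap K=\{0\}$, and $\{b_\alpha\}$ is independent. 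For each limit ordinal $\gamma\in\LIM$ set $P_\gamma=\sum_{n\in\omega\setminus\{0\}}\hull{b_{\gamma+n}}$; since the $b$'s are independent and of infinite order, $P_\gamma$ is free abelian of rank $\omega$, so there is a surjective homomorphism $f_\gamma\colon D_\gamma\oplus P_\gamma\to K$ with $D_\gamma\le\ker f_\gamma$. With $Q_\gamma=\{x+f_\gamma(x):x\in D_\gamma\oplus P_\gamma\}$, the proofs that each $Q_\gamma$ is dense in $G$ (because $D_\gamma\le Q_\gamma$ is dense in $W\le\overline{Q_\gamma}$) and that $\{Q_\gamma:\gamma\in\LIM\}$ is independent (via Lemma~\ref{indfam}, as $Q_\gamma\le A$ and $A\cap K=\{0\}$) go through verbatim, and Lemma~\ref{Le:eq} yields that $G$ is $d$-independent.

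The main obstacle is exactly the one new ingredient in the last step: securing, during the recursion, elements $b_\alpha$ of \emph{infinite} order lying in the independent dense subgroups $D_\alpha$, so that the auxiliary groups $P_\gamma$ are genuinely free abelian and hence admit surjections onto the arbitrary countable group $K$. When $W=\Zp$ this is automatic and nothing new is needed, but for $W=\prod_i\mathbb{Z}(p_i)$ and $W=\T$ a dense subgroup can be entirely torsion (e.g.\ $\bigoplus_i\mathbb{Z}(p_i)$, or $\Q/\Z$ in $\T$). To overcome this I would note that in both groups every $n$-torsion subgroup $W[n]$ is finite, so if $T$ denotes the already-constructed part (of size $<\c$) the set of elements $b$ with $\hull{b}\cap T\neq\{0\}$ is contained in $\bigcup_{n}\frac1n T$ and thus has cardinality $<\c$; since $W$ has torsion-free rank $\c$ one may therefore pick $b_\alpha$ of infinite order with $\hull{b_\alpha}\cap T=\{0\}$ and then take $D_\alpha=\hull{b_\alpha}\oplus D$ for a dense countable $D$ avoiding $T+\hull{b_\alpha}$, which keeps the whole family independent. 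A secondary point to keep in view is that the $d$-independence of $W$ is invoked only through the \emph{already known} second-countable case, so that Corollary~\ref{unbounded} does not depend circularly on the main results of Section~\ref{S2}.
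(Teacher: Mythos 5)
Your reduction machinery (the modified recursion securing infinite-order elements $b_\alpha$ via the finiteness of the torsion subgroups $W[n]$, the grafting of $K$ through surjections from the free group $P_\gamma$, and the appeal to Lemma~\ref{indfam} and Lemma~\ref{Le:eq}) is sound, and in the totally disconnected case your route — invoking the Márquez--Tkachenko metrizable theorem for $W=\prod_i\Z(p_i)$ and re-running Theorem~\ref{Th:eq} with torsion-freeness weakened — is a legitimate alternative to the paper's argument. The genuine gap is in your connected case: you claim that a nontrivial identity component must contain a closed copy of $\Zp$ or of $\T$, on the grounds that ``any solenoid contains $\Zp$.'' This is false. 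Let $X\leq\Q$ be the group of rationals with square-free denominator (equivalently $X=\sum_p \tfrac1p\Z$), and let $C$ be its Pontryagin dual: a metrizable, hence separable, connected compact abelian group, certainly unbounded. Closed subgroups of $C$ are exactly the duals of quotients of $X$. Every nonzero subgroup $B\leq X$ meets $\Z$ in some $n\Z$ with $n\geq 1$, so $X/B$ is a quotient of $X/n\Z\cong\tfrac1n X/\Z$, all of whose primary components are finite cyclic (the $p$-valuations of elements of $\tfrac1n X$ are bounded below); hence every proper quotient of $X$ is torsion with finite primary components. In particular $X$ has no quotient isomorphic to $\Z$ and none isomorphic to $\Z(p^\infty)$, so $C$ contains no closed copy of $\T$ and no closed copy of $\Zp$. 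Taking $G=C$ in the corollary, your proof produces no subgroup $W$ at all: $C$ is connected, so Lemma~\ref{infip} is unavailable, and your dichotomy for the connected case fails.

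The paper avoids this entirely: when the compact subgroup is not totally disconnected, it applies \cite[Lemma~2.37]{MT}, which gives the identity component a family of $\c$ independent dense \emph{cyclic} (hence torsion-free) subgroups; their direct sum is a free abelian $d$-independent subgroup and Theorem~\ref{Th:eq} applies verbatim. (In the totally disconnected case the paper likewise manufactures a torsion-free $d$-independent subgroup inside $\prod_i\Z(p_i)$ by hand, using characters $\chi_E$ indexed by a chain in $[\omega]^\omega$, so that Theorem~\ref{Th:eq} never needs modification.) Your connected case is repairable but needs a real argument: for a rank-one torsion-free quotient $A$ of the dual, either $A\cong\Z$ (giving $\T\leq C$), or $A$ is $p$-divisible for some $p$ (then $A/\Z$ has a $\Z(p^\infty)$ summand, giving $\Zp\leq C$), or $A/\Z$ has infinitely many nontrivial finite primary components, giving a quotient $\bigoplus_i\Z(p_i)$ and hence a closed subgroup $\prod_i\Z(p_i)\leq C$, which returns you to your totally disconnected machinery. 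As written, however, the step is wrong, and it is exactly the case your proof was supposed to add beyond the second-countable theorem you cite.
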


\begin{proof}
By Theorem~\ref{Th:eq}, it suffices to show that every unbounded separable
compact abelian group $K$ contains a torsion-free $d$-independent subgroup
(note that every compact subgroup of $G$ is necessarily separable).

First assume that $K$ is not totally disconnected.
By \cite[Lemma~2.37]{MT}, the identity component $K_0$ of $K$ admits a family
of cardinality $\mathfrak{c}$ consisting of independent dense cyclic
subgroups, which are necessarily torsion-free.
The direct sum of these subgroups is therefore a free abelian group, and it
is $d$-independent by Lemma~\ref{Le:eq}.

Next, assume that $K$ contains a compact subgroup topologically isomorphic to
the $p$-adic group $\mathbb{Z}_p$ for some prime $p$.
Since $\mathbb{Z}_p$ is torsion-free and $d$-independent, the conclusion
follows immediately.

It remains to consider the case where $K$ is totally disconnected and
contains no subgroup topologically isomorphic to $\mathbb{Z}_p$ for any
prime $p$.
In this situation, according to Lemma \ref{infip}, $K$ contains a subgroup of the form
\[
N\cong\prod_{i=1}^{\infty}\mathbb{Z}(p_i),
\]
where $p_i$ is an odd prime for each $i$.
So, it suffices to assume that $K=N$.
We show that $K$ contains a torsion-free $d$-independent subgroup.

Let $X$ be the Pontryagin dual group of $K$.
Then $X$ is isomorphic to $\bigoplus_{i=1}^{\infty}\mathbb{Z}(p_i)$.
For each $i$, let $X_i$ denote the subgroup of $X$ isomorphic to
$\mathbb{Z}(p_i)$, and fix a nonzero element $x_i\in X_i$.

Let $[\omega]^{\omega}$ denote the family of all infinite subsets of $\omega$.
It is well known that $[\omega]^{\omega}$ admits a chain $\mathcal{C}$ of cardinality
$\mathfrak{c}$ such that the difference of any two distinct elements
of $\mathcal{C}$ is infinite.
Indeed, identifying $\omega$ with $\mathbb{Q}$, for each real number $x$ one may
consider the set
\[
R_x:=\{r\in\mathbb{Q}: r<x\}.
\]
The family $\{R_x:x\in\mathbb{R}\}$ forms such a chain.

Fix such a chain $\mathcal{C}\subseteq[\omega]^\omega$.
For each $E\in\mathcal{C}$, define a homomorphism
$\chi_E\colon X\to\mathbb{T}$
by\footnote{Here we view $\T$ as the continuous image of the interval $(-\frac{1}{2}, \frac{1}{2}]$ under the natural map $\psi$, and for each $x \in \T$, we identify it with the unique real number $y \in (-\frac{1}{2}, \frac{1}{2}]$ such that $\psi(y) = x$.}
\[
\chi_E(x_n)=
\begin{cases}
\dfrac{1}{p_n}, & n\in E,\\[6pt]
-\dfrac{1}{p_n}, & n\notin E.
\end{cases}
\]
Each $\chi_E$ induces a group embedding of $X$ into $\mathbb{T}$ with dense
image.
By standard arguments in Pontryagin duality, when $\chi_E$ is regarded in the
canonical way as an element of $K$, it generates a dense torsion-free subgroup
of $K$.

To prove that $K$ is $d$-independent, it suffices by Lemma~\ref{Le:eq} to show that the family
\[
\{\chi_E:\; E\in\mathcal{C}\}
\]
is pairwise independent. In fact, we shall establish the stronger statement that this family is independent, which is of independent interest.

Indeed, let
\[
E_1\subsetneq E_2\subsetneq \cdots \subsetneq E_m
\]
be elements of $\mathcal{C}$ and suppose that
\[
\sum_{i=1}^m k_i\chi_{E_i}=0
\qquad (k_1,\dots,k_m\in\mathbb{Z}).
\]

Put $E_0=\emptyset$.
Since each difference $E_j\setminus E_{j-1}$ is infinite, for every
$j\in\{1,\dots,m\}$ we may choose
\[
s_j\in E_j\setminus E_{j-1}
\quad\text{such that}\quad
p_{s_j}>\sum_{i=1}^m |k_i|.
\]
Then
\[
0=\Bigl(\sum_{i=1}^m k_i\chi_{E_i}\Bigr)(x_{s_j})
 =\sum_{i=1}^m k_i\,\chi_{E_i}(x_{s_j})
 =\frac{-\sum_{i=1}^{j-1} k_i+\sum_{i=j}^m k_i}{p_{s_j}}.
\]
By the choice of $s_j$, the absolute value of the numerator is strictly smaller
than $p_{s_j}$.
Hence this element is equal to $0$ in $\mathbb{T}$ if and only if
\[
-\sum_{i=1}^{j-1} k_i+\sum_{i=j}^m k_i=0.
\]

Letting $j$ range over $\{1,\dots,m\}$, we obtain a system of $m$ linear
equations whose unique solution is
\[
k_1=k_2=\cdots=k_m=0.
\]
Therefore, the family $\{\chi_E:E\in\mathcal{C}\}$ is independent.

Finally, since $|\mathcal{C}|=\mathfrak{c}$, Lemma~\ref{Le:eq} implies that
\[
S:=\bigoplus_{E\in\mathcal{C}}\langle\chi_E\rangle\leq K
\]
is $d$-independent.
\end{proof}

We next present a bounded counterpart of Theorem~\ref{Th:eq}.
Although the proof is very similar, we include it here for the sake of
completeness.
Recall that for a bounded abelian group $G$, the \emph{exponent} of $G$ is the
least positive integer $n$ such that $nG=\{0\}$.

\begin{theorem}\label{bounded}
Let $G$ be a bounded separable topological abelian group of exponent $n$.
If $G$ admits a $d$-independent subgroup $H$ of exponent $n$, then $G$ is
$d$-independent.
\end{theorem}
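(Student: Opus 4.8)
The plan is to follow the proof of Theorem~\ref{Th:eq} almost verbatim, making a single structural adjustment to accommodate boundedness. Since $H$ is $d$-independent it is in particular separable, and by separability of $G$ I would fix a countable dense subgroup $K$ of $G$ and set $S=K\cap H$. Using the $d$-independence of $H$, I would build by transfinite recursion an independent family $\{D_\alpha:\alpha<\mathfrak{c}\}$ of countable dense subgroups of $H$ with
\[
D_\alpha\cap\Bigl(S+\sum_{\beta<\alpha}D_\beta\Bigr)=\{0\},
\]
so that $A:=\bigoplus_{\alpha<\mathfrak{c}}D_\alpha$ satisfies $A\cap K=\{0\}$, exactly as before.

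The genuine difference lies in the choice of the auxiliary generators $b_\alpha$ and the definition of the groups $P_\gamma$. In the torsion-free case $P_\gamma$ was free of infinite rank and therefore surjected onto the countable group $K$ automatically; here I must instead exploit the exponent. First I would observe that each $D_\alpha$, being dense in $H$, again has exponent exactly $n$: indeed $nD_\alpha=\{0\}$, while if $mD_\alpha=\{0\}$ for some $m<n$ then continuity of multiplication by $m$ would give $mH\subseteq\overline{mD_\alpha}=\{0\}$, contradicting the exponent of $H$. Since a bounded abelian group of exponent $n$ always contains an element of order exactly $n$ (decompose into primary components and add generators of maximal order from each), I would choose $b_\alpha\in D_\alpha$ of order exactly $n$. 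Then, for each $\gamma\in\mathbf{LIM}$, the group
\[
P_\gamma=\sum_{j\in\omega\setminus\{0\}}\langle b_{\gamma+j}\rangle
\]
is a direct sum by construction and hence isomorphic to $\bigoplus_\omega\mathbb{Z}(n)$.

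The key algebraic fact is now that $\bigoplus_\omega\mathbb{Z}(n)$ admits every countable abelian group of exponent dividing $n$ as a quotient: writing such a group as a direct sum of cyclic groups of orders dividing $n$, one maps each copy of $\mathbb{Z}(n)$ onto the corresponding cyclic summand. Since $nK=\{0\}$, the group $K$ is of this form, so I can fix a surjective homomorphism $f_\gamma\colon D_\gamma\oplus P_\gamma\to K$ with $D_\gamma\leq\ker f_\gamma$, and set $Q_\gamma=\{x+f_\gamma(x):x\in D_\gamma\oplus P_\gamma\}$. From here the argument is identical to that of Theorem~\ref{Th:eq}: each $Q_\gamma$ contains $D_\gamma$ and is therefore dense in $G$, and the family $\{Q_\gamma:\gamma\in\mathbf{LIM}\}$ is independent by Lemma~\ref{indfam}, since $Q_\gamma\leq A$ and $A\cap K=\{0\}$. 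Lemma~\ref{Le:eq} then yields that $G$ is $d$-independent.

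The only step requiring care, and thus the main obstacle, is guaranteeing the surjection $f_\gamma$: everything hinges on producing generators $b_\alpha$ of full order $n$, which in turn rests on the two elementary observations that the exponent is inherited by dense subgroups and that exponent $n$ forces an element of order $n$. Once $P_\gamma\cong\bigoplus_\omega\mathbb{Z}(n)$ is available, its universality among countable exponent-$n$ groups replaces the role played by infinite free rank in the torsion-free proof, and no further changes are needed.
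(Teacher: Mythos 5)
Your proposal is correct and follows essentially the same route as the paper's own proof: the same transfinite construction of independent dense subgroups $D_\alpha$ of $H$ avoiding $K$, the same choice of order-$n$ elements $b_\alpha$ to form $P_\gamma\cong\bigoplus_\omega\mathbb{Z}(n)$, and the same graph construction $Q_\gamma=\{x+f_\gamma(x)\}$ combined with Lemma~\ref{indfam} and Lemma~\ref{Le:eq}. In fact you supply two details the paper leaves implicit, namely that dense subgroups of $H$ inherit the exponent $n$ (so elements of order exactly $n$ exist in each $D_\alpha$) and that $\bigoplus_\omega\mathbb{Z}(n)$ surjects onto every countable group of exponent dividing $n$.
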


\begin{proof}
Let $K$ be a countable dense subgroup of $G$.
Arguing as in the proof of Theorem~\ref{Th:eq}, one can construct a family
$\{D_\alpha:\alpha<\mathfrak{c}\}$ of independent countable dense
subgroups of $H$ such that
\[
\Bigl(\bigcup_{\alpha<\mathfrak{c}}D_\alpha\Bigr)\cap K=\{0\}.
\]

For each $\alpha<\mathfrak{c}$, choose an element $b_\alpha\in D_\alpha$ of
order $n$.
Since $K$ is countable and has exponent $n$, for every limit ordinal
$\gamma<\mathfrak{c}$ the group
\[
\bigoplus_{m\in\omega\setminus\{0\}}\langle b_{\gamma+m}\rangle
\]
admits a surjective homomorphism onto $K$.
Consequently, there exists a surjective homomorphism
\[
f_\gamma\colon
D_\gamma\oplus\Bigl(\bigoplus_{m\in\omega\setminus\{0\}}
\langle b_{\gamma+m}\rangle\Bigr)\longrightarrow K
\]
whose kernel contains $D_\gamma$.

As in the proof of Theorem~\ref{Th:eq}, consider the graphs of these maps and
define
\[
Q_\gamma:=\{x+f_\gamma(x):
x\in D_\gamma\oplus(\bigoplus_{m\in\omega\setminus\{0\}}
\langle b_{\gamma+m}\rangle)\},
\qquad \gamma\in\mathbf{LIM}.
\]
Then the family $\{Q_\gamma:\gamma\in\mathbf{LIM}\}$ consists of independent
countable dense subgroups of $G$.
By Lemma~\ref{Le:eq}, this implies that $G$ is $d$-independent.
\end{proof}

We are now ready to give the final proof of the main result.

\begin{theorem}
Let $G$ be a separable locally compact abelian group.
Then $G$ is $d$-independent if and only if $G$ is an $M$-group.
\end{theorem}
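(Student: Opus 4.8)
Since necessity is already recorded in the introduction (an abelian topological group fails to be an $M$-group precisely when some $nG$ has cardinality strictly between $1$ and $\mathfrak{c}$, which a dense subgroup trivially meeting $nG$ cannot survive), I would concentrate on the converse: every separable locally compact abelian $M$-group $G$ is $d$-independent. The plan is to peel off the structural pieces using the tools of this section and to localize the genuinely new content in one compact bounded situation. By the structure theorem for locally compact abelian groups, $G$ is topologically isomorphic to $\R^n\times L$, where $n\in\N$ and $L$ contains a compact open subgroup $K$. If $n\geq 1$, then $G$ contains a copy of $\R$, which is torsion-free and $d$-independent (every nontrivial subgroup of $\R$ is cyclic or dense, so a $\Q$-Hamel basis $\{e_i\}_{i<\mathfrak{c}}$ yields $\mathfrak{c}$ pairwise independent countable dense subgroups $\Q e_i$, whence Lemma~\ref{Le:eq} applies); Theorem~\ref{Th:eq} then gives that $G$ is $d$-independent. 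I may therefore assume $n=0$, so $G=L$ has a compact open subgroup $K$.

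If $K$ is unbounded, then $G$ has an unbounded compact subgroup and Corollary~\ref{unbounded} applies directly. So suppose $K$ is bounded, say of exponent $m$. A nontrivial connected compact abelian group is divisible, hence unbounded, so boundedness of $K$ forces $K$, and therefore $G$, to be totally disconnected. The decisive point is that separability now enters through the discrete quotient: since $K$ is open, $G/K$ is discrete, and as a continuous image of the separable group $G$ it is separable, hence \emph{countable}. Consequently $mG\cong G/G[m]$, where $G[m]$ is the kernel of multiplication by $m$, is a quotient of the countable group $G/K$ (because $K\subseteq G[m]$), so $mG$ is countable; the $M$-group property then forces $mG=\{0\}$. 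Thus $G$ is itself bounded, say of exponent $N$.

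It remains to treat a bounded separable totally disconnected locally compact abelian $M$-group $G$ of exponent $N$, and the natural strategy is to exhibit a $d$-independent subgroup of exponent $N$ and invoke Theorem~\ref{bounded}. Writing $K=\prod_{j\in J}\Z(d_j)$ for the compact open subgroup (a product of finite cyclic groups, by Pontryagin duality) and reusing the countability of $G/K$, I would establish the ``no thin layers'' principle: for every $n$, if $nK$ is finite then $nK=\{0\}$ (otherwise $nG$ would be countable and nontrivial, contradicting the $M$-group property). In particular $K$ is again an $M$-group, its exponent equals $N$, and, choosing for each prime $p\mid N$ a multiple that annihilates every primary component of $K$ except its $p$-part $K_p$ (of exponent $p^{k_p}$) while acting on $K_p$ as multiplication by $p^{k_p-1}$, the principle shows that $p^{k_p-1}K_p$ is infinite; hence $K_p$ contains infinitely many cyclic factors of maximal order $p^{k_p}$. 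Gathering these factors across the finitely many primes dividing $N$ produces a closed subgroup of $K$ topologically isomorphic to $\prod_{\omega}\Z(N)$, which is second-countable, of exponent $N$, and an $M$-group; it is therefore $d$-independent by the second-countable case of \cite{MT} (or by a direct construction). Theorem~\ref{bounded} then yields that $G$ is $d$-independent.

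The main obstacle, and the place where the hypotheses are used most sharply, is the passage to the bounded compact case in the last paragraph. Everything hinges on the interaction between separability (which collapses $G/K$ to a countable group) and the $M$-group condition (which forbids countable nontrivial layers): together they push all of the mass of $G$ into the compact part $K$, with full exponent and infinitely many maximal-order coordinates. I expect the most delicate bookkeeping to lie in isolating a single primary component in order to locate those infinitely many cyclic factors of order $p^{k_p}$; and, should one wish to avoid invoking the metrizable case of \cite{MT}, the subtle point becomes a direct proof that $\prod_\omega\Z(p^k)$ is $d$-independent, since the distinct-primes argument of Corollary~\ref{unbounded} no longer supplies the required independence when only a single prime is present.
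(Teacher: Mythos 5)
Your proposal is correct and follows essentially the same route as the paper: the same structure-theorem decomposition $\R^n\times L$ with compact open $K$, the same three-way case split handled by Theorem~\ref{Th:eq} and Corollary~\ref{unbounded}, the same use of countability of $G/K$ plus the $M$-group condition to force $G$ bounded with the exponent of $K$, the same extraction of $\Z(N)^\omega\leq K$ via the multiplication-by-$N/p$ argument on primary components, and the same final appeal to \cite[Theorem 3.7]{MT} together with Theorem~\ref{bounded}. The only differences are cosmetic (your explicit Hamel-basis verification that $\R$ is $d$-independent, and phrasing the boundedness step via $mG\cong G/G[m]$ rather than $kG/kK\cong k(G/K)$).
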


\begin{proof}
Only the ``if'' direction requires verification since $d$-independent abelian groups are $M$-groups \cite[Proposition 2.1]{MT}.
By the structure theorem for locally compact abelian groups,
$G$ is topologically isomorphic to $\mathbb{R}^m\times H$ for some
$m\in\omega$, where $H$ admits an open compact subgroup $K$.

If $m>0$ or if $K$ is unbounded, then Theorem \ref{Th:eq} or Corollary~\ref{unbounded} implies that
$G$ is $d$-independent.
Thus it remains to consider the case $m=0$, that is, $G=H$, and $K$ is
bounded.

Let $k$ be a positive integer.
Then $kK\leq kG$, and
\[
kG/kK\cong k(G/K)\leq G/K.
\]
Since $G$ is separable and $K$ is open in $G$, the quotient $G/K$ is
countable.
Hence $kG/kK$ is also countable.
By the definition of an $M$-group, the group $kG$ has cardinality either $1$
or at least $\mathfrak{c}$; the same therefore holds for $kK$.
In particular, $K$ is itself an $M$-group.

Let $n$ be the exponent of $K$, and take $k=n$.
Then $kG$ is countable, hence trivial.
It follows that $G$ and $K$ have the same exponent.

Recall that every bounded compact abelian group admits a decomposition as a
direct product of finite abelian $p$-groups.
Fix a prime $p$ dividing $n$, and let $s$ be the largest integer such that
$p^s$ divides $n$.
In the direct product decomposition of $K$, there are infinitely many
factors isomorphic to $\mathbb{Z}(p^s)$.
Indeed, the group $\frac{n}{p}K$ is nontrivial and hence infinite, while in
the homomorphism
\[
K\to \tfrac{n}{p}K,\qquad x\mapsto \tfrac{n}{p}x,
\]
all other primary components vanish.
Consequently, $K$ contains a closed subgroup isomorphic to
$\mathbb{Z}(p^s)^\omega$.

Letting $p$ range over all prime divisors of $n$, the sum of the resulting
subgroups is isomorphic to $\mathbb{Z}(n)^\omega$, which is $d$-independent by \cite[Theorem 3.7]{MT}.
Applying Corollary~\ref{bounded}, we conclude that $G$ is $d$-independent.
\end{proof}

\section{Further remarks}\label{S3}

In this section, we discuss several additional problems and observations related to $d$-independence.

\subsection{Powers of $d$-independent groups}
\begin{proposition}\label{Prop:InfP}
Let $G$ be a non-trivial separable topological abelian group and let
$\kappa$ be an infinite cardinal with $\kappa\leq \mathfrak{c}$.
Then $G^\kappa$ is $d$-independent.
\end{proposition}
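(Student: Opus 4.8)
The plan is to reduce, via Lemma~\ref{Le:eq}, to producing $\mathfrak{c}$ pairwise independent countable dense subgroups, and then to split into cases according to the algebraic type of the base group. First I would record that $G^{\kappa}$ is separable: since $\kappa\le\mathfrak{c}$ this is the Hewitt--Marczewski--Pondiczery theorem. Next, because $\kappa$ is infinite we have $\kappa\cdot\omega=\kappa$, so that $G^{\kappa}\cong(G^{\kappa})^{\omega}$; writing $H:=G^{\kappa}$ (a nontrivial separable abelian group), it therefore suffices to prove that $H^{\omega}$ is $d$-independent. This reduction is what lets me keep the exponent of the power equal to $\omega$ in the delicate case below, where the topology becomes irrelevant.

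I would then distinguish three cases. If $H$ is bounded of exponent $n$, pick $h\in H$ of order $n$ (a bounded abelian group attains its exponent); then $\langle h\rangle\cong\mathbb{Z}(n)$ is finite, hence closed and discrete, so $C:=\langle h\rangle^{\omega}$ is a closed subgroup of $H^{\omega}$ topologically isomorphic to $\mathbb{Z}(n)^{\omega}$. Being a compact separable $M$-group, $C$ is $d$-independent by the main theorem of Section~\ref{S2}, and Theorem~\ref{bounded} then yields that $H^{\omega}$ is $d$-independent. If $H$ is torsion but unbounded, choose $h_{i}\in H$ with $\mathrm{ord}(h_{i})\to\infty$; then $\prod_{i}\langle h_{i}\rangle$ is a closed unbounded compact subgroup of $H^{\omega}$, and Corollary~\ref{unbounded} applies directly. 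The remaining case, in which $H$ has an element $a$ of infinite order, is the heart of the matter: here $\langle a\rangle^{\omega}$ is a torsion-free subgroup of $H^{\omega}$ whose subspace topology is the $\omega$-th power of the topology $\tau$ that $\langle a\rangle\cong\mathbb{Z}$ inherits from $H$, so by Theorem~\ref{Th:eq} it is enough to show that $(\mathbb{Z},\tau)^{\omega}$ is $d$-independent. Since $\tau$ is coarser than the discrete topology, any subgroup dense in $\mathbb{Z}^{\omega}$ (with the product topology) is a fortiori dense in $(\mathbb{Z},\tau)^{\omega}$, so I may simply work in $\mathbb{Z}^{\omega}$.

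The key device is to pass to formal power series. The group isomorphism $\mathbb{Z}^{\omega}\cong\mathbb{Z}[[T]]$, $(a_{n})_{n}\mapsto\sum_{n}a_{n}T^{n}$, turns the product topology into coefficientwise convergence and the dense subgroup $\mathbb{Z}^{(\omega)}$ into the polynomial ring $\mathbb{Z}[T]$. For a unit $f\in\mathbb{Z}[[T]]$ (that is, $f(0)=\pm1$) the subgroup $f\mathbb{Z}[T]$ is countable and dense, since $f^{-1}\in\mathbb{Z}[[T]]$ lets one match any prescribed initial segment of coefficients. I would choose a family $\{f_{r}:r<\mathfrak{c}\}$ of units that is algebraically independent over the field $\mathbb{Q}(T)$ of rational functions; such a family exists by a routine transfinite recursion, because the relative algebraic closure of $\mathbb{Q}(T)$ inside $\mathbb{Q}((T))$ is countable whereas the set of units of $\mathbb{Z}[[T]]$ has cardinality $\mathfrak{c}$. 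The subgroups $D_{r}:=f_{r}\mathbb{Z}[T]$ are then pairwise independent: a common nonzero element would give $f_{r}p=f_{s}q$ with $p,q\in\mathbb{Z}[T]\setminus\{0\}$, whence $f_{r}/f_{s}=q/p\in\mathbb{Q}(T)$, contradicting the algebraic independence of $f_{r}$ and $f_{s}$ over $\mathbb{Q}(T)$. This produces the required $\mathfrak{c}$ pairwise independent countable dense subgroups and, through the reductions above, finishes the proof.

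The step I expect to be the main obstacle is precisely this torsion-free construction. The naive approach of taking graphs $\{w+\varphi_{r}(w)\}$ of homomorphisms and demanding that the differences $\varphi_{r}-\varphi_{s}$ be injective runs into an ``everywhere different'' obstruction: a diagonal choice would require $\mathfrak{c}$ functions $\omega\to\omega$ that are pairwise distinct at every coordinate, which is impossible since evaluation at a single coordinate would inject $\mathfrak{c}$ into $\omega$. Reconciling injectivity of the differences with the density of each subgroup is genuinely delicate, and it is exactly the passage to $\mathbb{Z}[[T]]$ — replacing ``injective difference'' by ``algebraic independence over $\mathbb{Q}(T)$'' and ``dense subgroup'' by ``principal subgroup generated by a unit'' — that circumvents it. Recognising this device is, to my mind, the crux of the whole proposition.
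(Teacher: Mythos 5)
Your proof is correct, but it follows a genuinely different route from the paper's. The paper's argument is a two-line reduction: take a countable dense subgroup $H\le G$, note $H^\kappa$ is dense in $G^\kappa$ and that a dense $d$-independent subgroup makes the whole group $d$-independent, and then invoke \cite[Theorem~3.7]{MT}, which asserts that $X^\kappa$ is $d$-independent for every non-trivial abelian group $X$ with countable network. You instead use the identity $\kappa\cdot\omega=\kappa$ to reduce to an $\omega$-th power, split into the three algebraic cases (bounded, torsion unbounded, element of infinite order), and dispatch them with the paper's own Section~\ref{S2} machinery: Theorem~\ref{bounded} together with the $d$-independence of $\mathbb{Z}(n)^\omega$, Corollary~\ref{unbounded} applied to a metrizable compact subgroup $\prod_i\langle h_i\rangle$, and Theorem~\ref{Th:eq} applied to $\langle a\rangle^\omega$. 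The torsion-free case is where you add something new: identifying $\mathbb{Z}^\omega$ with $\mathbb{Z}[[T]]$ and taking the principal subgroups $f_r\mathbb{Z}[T]$ for units $f_r$ that are algebraically independent over $\mathbb{Q}(T)$ is a clean, explicit way to produce $\mathfrak{c}$ pairwise independent countable dense subgroups, and the observation that density in the finer (discrete-product) topology passes to the coarser topology $\tau^\omega$ correctly handles the unknown subspace topology on $\langle a\rangle$. What each approach buys: the paper's proof is short but leans entirely on the countable-network theorem of \cite{MT}; yours is longer but essentially self-contained modulo Section~\ref{S2}, and the $\mathbb{Z}[[T]]$ device (in effect a direct proof that $\mathbb{Z}^\omega$ satisfies Lemma~\ref{Le:eq}(3)) has independent interest. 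Two small points of hygiene: in the transfinite recursion you must avoid, at stage $r$, the relative algebraic closure of $\mathbb{Q}(T)(f_s:s<r)$ — a set of size $<\mathfrak{c}$ — not merely that of $\mathbb{Q}(T)$ as your parenthetical suggests; and in fact pairwise independence needs only that $f_r/f_s\notin\mathbb{Q}(T)$ for $r\neq s$, so choosing one unit from each of $\mathfrak{c}$ many cosets of the relation $f\sim g \iff f/g\in\mathbb{Q}(T)$ would already suffice.
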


\begin{proof}
Let $H$ be a countable dense subgroup of $G$.
By definition of $d$-independence, if a topological group contains a dense
$d$-independent subgroup, then it is itself $d$-independent.
Since $H^\kappa$ is dense in $G^\kappa$, it suffices to show that
$H^\kappa$ is $d$-independent.

As $H$ is countable, it has a countable network.
Hence we may apply \cite[Theorem~3.7]{MT}, which asserts that $X^\kappa$
is $d$-independent for every non-trivial topological abelian group $X$
with a countable network.
\end{proof}

\begin{proposition}\label{n+1}
Let $G$ be a separable topological abelian group such that $G^n$ is
$d$-independent for some positive integer $n$.
Then $G^{n+1}$ is also $d$-independent.
\end{proposition}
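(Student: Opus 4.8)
The plan is to verify condition~(2) of Lemma~\ref{Le:eq} for $G^{n+1}$, that is, to produce a family of $\c$ many independent countable dense subgroups, following the graph construction of Theorem~\ref{Th:eq}. Write $G^{n+1}=H\oplus K$ with $H=G^n\times\{0\}$ and $K=\{0\}\times G$, so that $H\cap K=\{0\}$ and $K$ is closed in $G^{n+1}$. Since $G^n$ is separable and $d$-independent, Lemma~\ref{Le:eq} supplies an independent family $\{D_\alpha:\alpha<\c\}$ of countable dense subgroups of $H\cong G^n$. The blocks $\{\gamma,\gamma+1\}$ with $\gamma\in\LIM$ are pairwise disjoint, so I will use them to index the construction.

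The key device, which replaces the surjection onto a free group of rank $\omega$ used in Theorem~\ref{Th:eq}, is the coordinate projection $\pi\colon G^n\to G$ onto the first factor. As $\pi$ is a continuous surjective homomorphism, $\pi(D_{\gamma+1})$ is dense in $G$ for every $\gamma$. For each $\gamma\in\LIM$ I would set $N_\gamma=D_\gamma\oplus D_{\gamma+1}\leq H$ and define a homomorphism $f_\gamma\colon N_\gamma\to K$ that vanishes on $D_\gamma$ and sends $d\in D_{\gamma+1}$ to $(0,\pi(d))$, and then put $Q_\gamma=\{x+f_\gamma(x):x\in N_\gamma\}$. Each $Q_\gamma$ is countable. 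To see that $Q_\gamma$ is dense, observe that $Q_\gamma\supseteq D_\gamma$, which is dense in $H$, so $H\subseteq\overline{Q_\gamma}$; then for every $x\in N_\gamma$ we have $f_\gamma(x)=(x+f_\gamma(x))-x\in\overline{Q_\gamma}$, whence $f_\gamma(N_\gamma)=\{0\}\times\pi(D_{\gamma+1})\subseteq\overline{Q_\gamma}$. Since $\pi(D_{\gamma+1})$ is dense in $G$ and $K$ is closed, $K\subseteq\overline{Q_\gamma}$, and therefore $\overline{Q_\gamma}\supseteq H+K=G^{n+1}$. For independence I would apply Lemma~\ref{indfam} with this $H$ and $K$: disjointness of the index blocks makes $\{N_\gamma:\gamma\in\LIM\}$ an independent family of subgroups of $H$, and each $Q_\gamma$ is precisely the graph $B_{N_\gamma}$, so $\{Q_\gamma:\gamma\in\LIM\}$ is independent. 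As $|\LIM|=\c$, Lemma~\ref{Le:eq} then gives that $G^{n+1}$ is $d$-independent.

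The step I expect to be the main obstacle, and the reason the statement is not completely immediate from Theorem~\ref{Th:eq}, is producing a homomorphism from a piece of $H$ into $K$ with dense image while preserving independence. In Theorem~\ref{Th:eq} this rested on torsion-freeness, which let free blocks of rank $\omega$ map onto any countable group; it is not clear a priori that a dense subgroup of $G^n$ carries enough torsion of the correct orders to surject onto a dense subgroup of $G$ (consider $G$ an unbounded torsion group). The observation that dissolves this difficulty is that no surjection onto a prescribed countable dense subgroup is needed: the coordinate projection $\pi$ automatically carries the dense subgroup $D_{\gamma+1}$ to a dense subgroup of $G$, so dense image comes for free and all torsion bookkeeping disappears, making the argument uniform across every $G$. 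The remaining verifications—the density of $Q_\gamma$ via the subtraction trick and the disjointness of the index blocks—are routine.
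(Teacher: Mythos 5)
Your proof is correct and follows essentially the same route as the paper's: the same decomposition $G^{n+1}\cong G^n\oplus G$, the same independent family from Lemma~\ref{Le:eq} applied to $G^n$, the same graph construction along disjoint blocks $\{\gamma,\gamma+1\}$ using the coordinate projection as the homomorphism into the last factor, and the same density (subtraction trick) and independence (Lemma~\ref{indfam}) arguments. The only difference is the immaterial swap of which block member carries the projection and which lies in the kernel.
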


\begin{proof}
By Lemma~\ref{Le:eq}, the group $G^n$ admits a family
$\{H_\alpha:\alpha<\mathfrak{c}\}$ of independent countable dense subgroups.
Let $p\colon G^n\to G$ be the projection onto the first coordinate.
Then $p(H_\alpha)$ is dense in $G$ for each $\alpha<\mathfrak{c}$.

Let $\mathbf{LIM}$ denote the set of all limit ordinals below $\mathfrak{c}$.
For each $\gamma\in\mathbf{LIM}$, define a homomorphism
$f_\gamma\colon H_\gamma\oplus H_{\gamma+1}\to G$ by setting
\[
f_\gamma|_{H_\gamma}=p|_{H_\gamma}
\quad\text{and}\quad
f_\gamma(H_{\gamma+1})=\{0\}.
\]

Consider the group $P=P_0\oplus G\cong P_0\times G$, where $P_0$ is a copy of $G^n$ endowed
with the product topology.
Clearly, $P_0\cap G=\{0\}$.
Hence, by Lemma~\ref{indfam}, the family
\[
D_\gamma:=\{x+f_\gamma(x):x\in H_\gamma\oplus H_{\gamma+1}\},
\qquad \gamma\in\mathbf{LIM},
\]
is independent.

Since each $D_\gamma$ is countable, by Lemma~\ref{Le:eq} it suffices to show
that $D_\gamma$ is dense in $P$ for every $\gamma\in\mathbf{LIM}$.
The argument is analogous to that in the proof of Theorem~\ref{Th:eq}.
Indeed, $D_\gamma$ contains $H_{\gamma+1}$, which is dense in $P_0$.
Therefore $\overline{D_\gamma}$ contains $P_0$.

Moreover, for every $x\in H_\gamma$, we have
$f_\gamma(x)\in\overline{D_\gamma}$.
Since
\[
f_\gamma(H_\gamma)=p(H_\gamma)
\]
is dense in $G$, it follows that $\overline{D_\gamma}$ also contains $G$.
Consequently,
\[
P=P_0\oplus G\subseteq \overline{D_\gamma},
\]
and hence $D_\gamma$ is dense in $P$.
This completes the proof.
\end{proof}

\begin{remark}
By the above proposition, one may define a function $\varphi$ on the class of
all separable topological abelian groups with values in
$\omega + 1=\omega\cup\{\omega\}$, by letting $\varphi(G)$ be the least cardinal
$\kappa$ such that $G^\kappa$ is $d$-independent.
In particular, $G$ is $d$-independent if and only if $\varphi(G)=1$.

However, it is not clear whether this function is of genuine interest.
More precisely, it is still unknown whether the $d$-independence of $G^n$
for some $n\geq 1$ implies the $d$-independence of $G$ itself.
If this were the case, then $G$ would be $d$-independent if and only if
$G^n$ is $d$-independent for every $n\geq 1$, and consequently the function
$\varphi$ would take only two possible values, namely $1$ and $\omega$.

This leads to the following question.
\end{remark}

\begin{question}
Let $G$ be a topological abelian group such that $G\times G$ is
$d$-independent. Is $G$ necessarily $d$-independent?
\end{question}

If the answer to this question is positive, then $\varphi$ also takes only two
values.
Indeed, suppose that $\varphi(G)=n<\omega$, and let $m$ be the least integer
such that $2m\geq n$.
Then $m<n$ whenever $n>1$.
Since $G^n$ is $d$-independent, so is $G^{2m}$, by Proposition \ref{n+1}.
A positive answer to the above question would then imply that $G^m$ is
$d$-independent.
By the definition of $\varphi$, this yields $\varphi(G)\leq m<n$, a
contradiction unless $n=1$.

\subsection{$d$-Independence of Non-abelian Groups}

Independent subgroups play a central role in the study of abelian
$d$-independent groups. It is therefore natural to extend this notion
to the non-abelian setting.

Let $G$ be a group and let $(H_\alpha)_{\alpha<\kappa}$ be a chain of
subgroups of $G$. We say that this chain consists of
\emph{independent subgroups} if
\[
H_\alpha \cap \hull{\bigcup_{\beta<\alpha} H_\beta}=\{1\}
\quad \text{for every } \alpha<\kappa,
\]
where $1$ denotes the identity element of $G$.

It is straightforward to verify that this definition coincides with
the usual notion of independence in the abelian case. The main
difference in the non-abelian setting is that the order of the chain
matters: unlike in the abelian case, independence is not invariant
under permutations of the subgroups.

For example, let
\[
G=(\mathbb{R}\times \mathbb{R})\rtimes \mathbb{Z}(2),
\]
where the non-trivial element of $\mathbb{Z}(2)$ acts on
$\mathbb{R}\times \mathbb{R}$ by exchanging the coordinates, that is,
by sending $(x,y)$ to $(y,x)$.
Let $R_1$ and $R_2$ denote the first and second coordinate subgroups of
$\mathbb{R}\times \mathbb{R}$, respectively, and let $R_3$ be the unique
subgroup of $G$ isomorphic to $\mathbb{Z}(2)$.
Then the chain $(R_1,R_2,R_3)$ is independent, whereas
$(R_1,R_3,R_2)$ is not.

Pairwise independence can be defined in a similar manner; note,
however, that pairwise independence does not depend on the order of
the subgroups.

We now give a non-abelian analogue of Lemma~\ref{Le:eq}, although it will not be used elsewhere in this paper.

\begin{lemma}\label{Le:eq2}
Let $G$ be a separable topological group. Then the following are equivalent:
\begin{itemize}
\item[(1)] $G$ is $d$-independent;
\item[(2)] $G$ admits a chain $(D_\alpha)_{\alpha<\c}$ of independent countable dense subgroups;
\item[(3)] $G$ admits a family $\{D_\alpha:\alpha<\mathfrak c\}$ of pairwise independent countable dense subgroups.
\end{itemize}
\end{lemma}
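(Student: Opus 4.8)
The plan is to follow the same three-step cycle $(1)\Rightarrow(2)\Rightarrow(3)\Rightarrow(1)$ used in the abelian Lemma~\ref{Le:eq}, adapting each step to the chain-based definition of independence in the non-abelian setting. For $(1)\Rightarrow(2)$, I would again proceed by transfinite induction of length $\mathfrak{c}$: having constructed independent countable dense subgroups $D_\beta$ for all $\beta<\alpha$, I use $d$-independence applied to the subgroup $S=\hull{\bigcup_{\beta<\alpha}D_\beta}$, which has cardinality $<\mathfrak{c}$ (a countable union of countable sets, generating a subgroup of size $\leq|\alpha|\cdot\omega<\mathfrak{c}$), to obtain a countable dense $D_\alpha$ with $D_\alpha\cap S=\{1\}$. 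This is exactly the chain-independence condition $D_\alpha\cap\hull{\bigcup_{\beta<\alpha}D_\beta}=\{1\}$, so the resulting chain $(D_\alpha)_{\alpha<\mathfrak{c}}$ is independent.

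The implication $(2)\Rightarrow(3)$ requires slightly more care than in the abelian case, since the definitions of chain-independence and pairwise independence diverge. The key observation is that if $(D_\alpha)_{\alpha<\mathfrak{c}}$ is an independent chain, then for distinct $\alpha<\beta$ we have $D_\beta\cap\hull{\bigcup_{\gamma<\beta}D_\gamma}=\{1\}$, and since $D_\alpha\subseteq\hull{\bigcup_{\gamma<\beta}D_\gamma}$, it follows that $D_\alpha\cap D_\beta\subseteq D_\beta\cap\hull{\bigcup_{\gamma<\beta}D_\gamma}=\{1\}$. Thus the family is automatically pairwise independent, and the same family $\{D_\alpha:\alpha<\mathfrak{c}\}$ witnesses (3).

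For $(3)\Rightarrow(1)$, the argument is essentially identical to the abelian one: given a pairwise independent family $\{D_\alpha:\alpha<\mathfrak{c}\}$ of countable dense subgroups and a subgroup $S\leq G$ with $|S|<\mathfrak{c}$, each nontrivial element $s\in S$ can lie in at most one $D_\alpha$ (since pairwise independence means $D_\alpha\cap D_\beta=\{1\}$ for $\alpha\neq\beta$). Hence the map sending a nontrivial $s\in S$ to the unique index $\alpha$ with $s\in D_\alpha$ (when it exists) has range of cardinality at most $|S|<\mathfrak{c}$, so some $D_\kappa$ meets $S$ only in $\{1\}$, exhibiting the required dense subgroup. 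I expect the main subtlety to be in $(2)\Rightarrow(3)$, where one must not simply cite the abelian remark but instead invoke the chain structure to deduce pairwise independence; the ordering of the chain is what guarantees that the two-element intersections collapse to the identity. The remaining steps transcribe directly, with the additive notation $\{0\}$ replaced throughout by the multiplicative $\{1\}$ and sums $\sum_{\beta<\alpha}D_\beta$ replaced by generated subgroups $\hull{\bigcup_{\beta<\alpha}D_\beta}$.
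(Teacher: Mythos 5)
Your proposal is correct and follows exactly the route the paper intends: the paper's own proof of Lemma~\ref{Le:eq2} is a one-line remark that the argument is analogous to Lemma~\ref{Le:eq}, and your three-step cycle $(1)\Rightarrow(2)\Rightarrow(3)\Rightarrow(1)$ is precisely that analogy carried out with sums replaced by generated subgroups. Your explicit verification of $(2)\Rightarrow(3)$ --- that chain-independence forces $D_\alpha\cap D_\beta=\{1\}$ because $D_\alpha\subseteq\hull{\bigcup_{\gamma<\beta}D_\gamma}$ --- is the one detail the paper leaves implicit, and you handle it correctly.
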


\begin{proof}
The proof is analogous to that of Lemma~\ref{Le:eq}, with the notion of
independence interpreted in the non-abelian sense.
\end{proof}

It is an easy consequence of Pontryagin duality that every separable connected compact
abelian group admits a family of cardinality $\mathfrak{c}$ of independent dense cyclic
subgroups; this fact has already been used in \cite{MT}.
We shall show that such a family can be chosen with extra properties.

\begin{lemma}\label{Le:con}
Let $G$ be a separable connected compact abelian group.
Then $G$ admits a family $\{H_\alpha : \alpha<\mathfrak{c}\}$ of independent dense cyclic
subgroups such that every non-zero element
\[
0 \neq h \in \bigoplus_{\alpha<\mathfrak{c}} H_\alpha
\]
generates a dense subgroup of $G$.
\end{lemma}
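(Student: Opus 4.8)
```latex
The plan is to realize $G$ as a closed subgroup of a power of $\T$ via Pontryagin duality and build the desired family of characters by a direct, ``genericity'' construction, mimicking the independence computation already carried out in Corollary~\ref{unbounded}. Let $X=\widehat{G}$ be the Pontryagin dual. Since $G$ is connected, $X$ is torsion-free; since $G$ is separable, $X$ has cardinality at most $\mathfrak{c}$, and in fact $X$ embeds into $\T^{\omega}$-type data so that $|X|\le\mathfrak c$. A cyclic subgroup $\langle g\rangle$ of $G$ is dense precisely when the evaluation character $\chi_g\colon X\to\T$, $\chi_g(\psi)=\psi(g)$, is injective (equivalently, $g$ separates the points of $X$, which by duality is the statement that $\langle g\rangle$ is dense). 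Thus the whole problem translates into producing a family $\{\chi_\alpha:\alpha<\mathfrak c\}$ of injective homomorphisms $X\to\T$ with the property that \emph{every} nonzero integer combination $\sum_i k_i\chi_{\alpha_i}$ (over distinct indices) is again injective, since an element $h=\sum_i k_i g_{\alpha_i}$ of $\bigoplus_\alpha H_\alpha$ corresponds under duality exactly to the character $\sum_i k_i\chi_{\alpha_i}$, and $\langle h\rangle$ is dense iff that character is injective. Note that injectivity of each $\chi_{\alpha_i}$ automatically forces the family to be independent in the sense of Lemma~\ref{Le:eq}: if a nontrivial combination vanished, the corresponding element of $\bigoplus H_\alpha$ would be $0$, contradicting that its character is injective (hence nonzero).

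First I would fix a faithful enumeration of $X\setminus\{0\}$ and, more importantly, enumerate the countably many \emph{constraints} that must be avoided: for each nonzero integer tuple $(k_1,\dots,k_m)$ and each finite choice of distinct indices, the combination $\sum_i k_i\chi_{\alpha_i}$ must kill no nonzero element of $X$. The construction proceeds by transfinite recursion on $\alpha<\mathfrak c$, choosing $\chi_\alpha\colon X\to\T$ one homomorphism (one ``point'' of $G$) at a time. At stage $\alpha$, only countably many elements of $X$ and only countably many previously-chosen characters are relevant to any finite combination involving $\chi_\alpha$, so the set of ``bad'' values for $\chi_\alpha$ on each generator of $X$ is meagre/small, leaving a comeagre (indeed full-measure, using Haar measure on the compact group $G$) set of admissible choices. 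Concretely, since $X$ is torsion-free one may pick a maximal independent set (a basis of $X\otimes\Q$) and assign the values of $\chi_\alpha$ freely on it, then extend; the freedom of a continuum of independent real/torus coordinates is exactly what guarantees that a generic choice avoids all the countably many algebraic relations simultaneously. This is the mechanism already visible in Corollary~\ref{unbounded}, where the chain $\mathcal C$ of subsets of $\omega$ produced a continuum of characters with all finite combinations injective via a single explicit $p$-adic size argument.

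The main obstacle will be handling the combinatorics of \emph{all} finite integer combinations uniformly, rather than just pairwise independence or injectivity of the individual $\chi_\alpha$. For a single character, injectivity is a countable list of conditions ``$\chi_\alpha(\psi)\ne 0$'' for $\psi\in X\setminus\{0\}$; for combinations one must ensure $\sum_i k_i\chi_{\alpha_i}(\psi)\ne 0$ for every $\psi\ne 0$, every coefficient vector, and every finite index set, and these conditions interact across different stages of the recursion. The way I expect to defeat this is to exploit that $X$ is torsion-free of cardinality $\le\mathfrak c$: work over a $\Q$-basis of $X\otimes\Q$ and arrange the values $\chi_\alpha(e)\in\R/\Z$ (for basis elements $e$) to be \emph{algebraically independent over $\Q$ together with $1$}, across all $\alpha$ and all basis elements jointly. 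Since the set of reals avoiding any fixed countable collection of nontrivial $\Q$-algebraic relations is comeagre and co-null, and a transcendence basis of $\R$ over $\Q$ has size $\mathfrak c$, one can recursively select these values to lie in a fixed transcendence base; then $\sum_i k_i\chi_{\alpha_i}(\psi)=0$ in $\T$ would force a nontrivial rational linear relation among $1$ and these algebraically independent reals, which is impossible unless all coefficients (after clearing $\psi$'s rational coordinates) vanish — and since the $k_i$ are not all zero and the $\chi_{\alpha_i}$ act on a common torsion-free $X$, this pins down $\psi=0$. I would then close the argument by invoking Lemma~\ref{Le:eq}: the family $\{H_\alpha=\langle g_\alpha\rangle\}$ with $\chi_\alpha=\chi_{g_\alpha}$ is a continuum-sized independent family of dense cyclic subgroups, and the injectivity-of-all-combinations property is precisely the statement that every nonzero $h\in\bigoplus_\alpha H_\alpha$ generates a dense subgroup.
```
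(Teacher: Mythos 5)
Your proposal is correct in substance and follows the same duality-based strategy as the paper: reduce density of a cyclic subgroup $\langle h\rangle$ to injectivity of the corresponding character on the torsion-free dual $X$ (of size $\le\mathfrak{c}$), then manufacture $\mathfrak{c}$ many characters all of whose nonzero integer combinations remain injective, which simultaneously gives independence. Where you differ is the device used to obtain that last property. The paper decomposes $\mathbb{T}$ abstractly as $t(\mathbb{T})\oplus\bigoplus_{\alpha<\mathfrak{c}}Q_\alpha$, each $Q_\alpha$ a $\mathfrak{c}$-dimensional $\mathbb{Q}$-vector space, and takes $b_\alpha$ to be an embedding of $X$ into $Q_\alpha$; then $\sum_i n_i b_{\alpha_i}(x)=0$ forces each $n_i b_{\alpha_i}(x)=0$ by independence of the $Q_\alpha$'s, and injectivity plus torsion-freeness finish the argument in two lines, with no choice of basis of $X$ and no extension step. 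Your device --- assigning values on a $\mathbb{Q}$-basis $B$ of $X\otimes\mathbb{Q}$ from a set of reals that, together with $1$, is linearly independent over $\mathbb{Q}$ --- achieves the same end and is sound, but it carries two wrinkles you should address explicitly. First, extending $\chi_\alpha$ from $\langle B\rangle$ to $X$ (possible since $\mathbb{T}$ is divisible) determines the remaining values only up to torsion, so the verification of injectivity of a combination at $\psi\neq 0$ must pass to $n\psi\in\langle B\rangle$; this is exactly the ``clearing $\psi$'s rational coordinates'' step you allude to, and it needs to be written out rather than gestured at. Second, the genericity heuristic in your middle paragraph (comeagre/full-measure sets of admissible choices) does not survive scrutiny: $|X|$ can equal $\mathfrak{c}$ (e.g.\ $G=\mathbb{T}^{\mathfrak{c}}$), so the ``bad'' set at a given stage is a union of possibly $\mathfrak{c}$ many meagre or null sets, which ZFC does not guarantee to be small. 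Fortunately your final mechanism makes no appeal to genericity, so the proof stands once the extension point is spelled out; note also that full algebraic independence is more than you need --- $\mathbb{Q}$-linear independence together with $1$ already suffices.
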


\begin{proof}
Let $X$ be the Pontryagin dual of $G$.
Then $X$ is torsion-free and $|X|<\mathfrak{c}$.

Decompose the circle group $\mathbb{T}$ as
\[
\mathbb{T} = t(\mathbb{T}) \oplus \bigoplus_{\alpha<\mathfrak{c}} Q_\alpha,
\]
where $t(\mathbb{T})$ denotes the torsion subgroup of $\mathbb{T}$ and each
$Q_\alpha$ is a $\mathfrak{c}$-dimensional $\mathbb{Q}$-vector space.

For each $\alpha<\mathfrak{c}$, fix a group embedding
\[
b_\alpha : X \longrightarrow Q_\alpha \le \mathbb{T}.
\]
Each $b_\alpha$ can be regarded as an element of $G$.
Since $b_\alpha$ is injective, the cyclic subgroup
\[
H_\alpha := \hull{b_\alpha}
\]
is dense in $G$, and it is straightforward to verify that the family
$\{H_\alpha : \alpha<\mathfrak{c}\}$ is independent.

Now let
\[
0 \neq h = n_1 b_{\alpha_1} + n_2 b_{\alpha_2} + \cdots + n_k b_{\alpha_k}
\in \bigoplus_{\alpha<\mathfrak{c}} H_\alpha.
\]
To show that $\hull{h}$ is dense in $G$, it suffices to verify that $h$,
viewed as a homomorphism $h : X \to \mathbb{T}$, has trivial kernel.

Let $x \in \ker h$. Then
\[
0 = h(x) = \sum_{i=1}^k n_i b_{\alpha_i}(x).
\]
Since $b_{\alpha_i}(x)\in Q_{\alpha_i}$ and the subspaces $Q_{\alpha}$ are
linearly independent, this equality implies $x=0$.
Thus $\ker h=\{0\}$, and the proof is complete.
\end{proof}

\begin{remark}\label{re}
The above lemma immediately implies that for every proper closed subgroup $K$ of $G$,
the family $\{H_\alpha : \alpha<\mathfrak{c}\}\cup\{K\}$ remains independent.
Let $\pi: G\to G/K$ be the canonical projection.
Then the family $\{\pi(H_\alpha) : \alpha<\mathfrak{c}\}$ is still independent.

Therefore, for any subgroup $P\leq G/K$ with $|P|<\mathfrak{c}$,
there exists $\alpha<\mathfrak{c}$ such that
\[
\pi(H_\alpha)\cap P=\{0\}.
\]
Equivalently,
\[
H_\alpha\cap \pi^{-1}(P)\subseteq K.
\]
Since $K\cap H_\alpha=\{0\}$, this implies
\[
H_\alpha\cap \pi^{-1}(P)=\{0\}.
\]
\end{remark}

Now we consider the following question posed in \cite{MT}.

\begin{question}\label{q1}
Is it true that every separable connected compact group is $d$-independent?
\end{question}

Our next goal is to answer this question in the affirmative.
The proof proceeds in two steps.
First, we reduce the problem to the case of Lie groups.
Second, we establish the desired conclusion in this case.

Here, by a \emph{simple connected Lie group} we mean a connected real Lie group without
proper non-trivial connected closed normal subgroups.
Such groups need not be \emph{topologically simple}, as they may have a non-trivial
(but discrete) centre.
Throughout, $Z(G)$ denotes the centre of a group $G$.

\begin{proposition}\label{nonAb}
If every simple connected compact Lie group is $d$-independent,
then every separable connected compact group is $d$-independent.
\end{proposition}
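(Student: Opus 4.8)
The plan is to produce, via Lemma~\ref{Le:eq2}, a family of $\mathfrak{c}$ pairwise independent countable dense subgroups of $G$. Recall first that every closed subgroup of a separable compact group is separable, so all subgroups and quotients of $G$ appearing below are separable. If $G$ is abelian it is a nontrivial connected compact abelian group, hence unbounded, and Corollary~\ref{unbounded} already gives the conclusion; so assume $G$ is non-abelian. By the structure theory of compact connected groups, $G=Z(G)_0\cdot G'$, where $G'=\overline{[G,G]}$ is a connected compact semisimple group and $A:=G/G'$ is a connected compact abelian group; moreover the quotient map $q\colon G\to A$ restricts to a surjection of the connected centre $Z(G)_0$ onto $A$.

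First I would reduce the problem to the semisimple subgroup $G'$. When $A$ is nontrivial (the perfect case $G=G'$ being already the semisimple case), applying Lemma~\ref{Le:con} to $A$ yields independent dense cyclic subgroups $H_\alpha=\langle a_\alpha\rangle$ $(\alpha<\mathfrak{c})$; since $A$ is infinite each $a_\alpha$ has infinite order, and I may lift $a_\alpha$ to a central element $g_\alpha\in Z(G)_0\subseteq Z(G)$. Assuming $G'$ is $d$-independent, fix by Lemma~\ref{Le:eq2} a family $\{F_\alpha:\alpha<\mathfrak{c}\}$ of pairwise independent countable dense subgroups of $G'$ and set $D_\alpha=\langle g_\alpha\rangle F_\alpha$. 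Because $g_\alpha$ is central this is a countable subgroup, and since $\overline{D_\alpha}\supseteq\overline{F_\alpha}=G'$ while $q(\overline{D_\alpha})\supseteq\overline{H_\alpha}=A$, it is dense in $G$. As $F_\alpha\le\ker q$ one checks $q(D_\alpha)=H_\alpha$ and $D_\alpha\cap G'=F_\alpha$, so for $\alpha\neq\beta$ any $x\in D_\alpha\cap D_\beta$ satisfies $q(x)\in H_\alpha\cap H_\beta=\{0\}$, whence $x\in F_\alpha\cap F_\beta=\{1\}$. Thus $\{D_\alpha\}$ is pairwise independent, and the problem is reduced to proving that the semisimple group $G'$ is $d$-independent.

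For the semisimple group $G'$ I would use the central covering supplied by the structure theorem: there is a continuous surjection $\prod_{j\in J}S_j\to G'$ with central totally disconnected kernel, where each $S_j$ is a simply connected simple compact Lie group and $|J|\le\mathfrak{c}$. Passing to the adjoint factors $L_j=S_j/Z(S_j)$ induces a surjection $\Psi\colon G'\to\prod_{j\in J}L_j$ whose kernel $C$ is the (central, totally disconnected) centre $Z(G')$; each $L_j$ is a simple connected compact Lie group, hence $d$-independent by hypothesis. The plan is to build pairwise independent countable dense subgroups $E_\alpha$ of $\prod_j L_j$, chosen \emph{free} so that the central extension over $\Psi$ splits and each lifts to a section $\widetilde E_\alpha\le G'$ with $\widetilde E_\alpha\cap C=\{1\}$, and then to adjoin pairwise independent countable dense subgroups $C_\alpha$ of the kernel $C$. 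The subgroups $\widetilde E_\alpha C_\alpha$ are then dense in $G'$ (as $\Psi$ maps them onto the dense $E_\alpha$ and they contain the dense $C_\alpha\le C=\ker\Psi$), and the same bookkeeping through $\Psi$ as above, together with Remark~\ref{re} to align the central part, gives pairwise independence.

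The main obstacle is this semisimple step, and it has two parts. The first is a productivity statement: one must show that the product $\prod_{j\in J}L_j$ of the $d$-independent simple factors is itself $d$-independent, with pairwise independent dense subgroups that can be taken free. For finitely many factors this can be arranged by a graph construction, taking each $D_\alpha$ to be the graph of an isomorphism between pairwise independent dense free subgroups of the factors, with density furnished by Goursat's lemma; but for $|J|$ as large as $\mathfrak{c}$ one needs a genuinely infinite combinatorial device in the spirit of the chain construction used in Corollary~\ref{unbounded}. The second is the transfer through the central kernel $C$, which requires $C$ to be $d$-independent: this holds by Corollary~\ref{unbounded} once $C$ is unbounded, and in the remaining cases (where $C$ is finite, or fails to be an $M$-group through a finite primary component) it is reduced to the tractable finite-dimensional Lie situation by splitting off the finitely many simple factors responsible for that component and folding them into the finite-product argument.
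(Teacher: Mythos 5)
Your first reduction (from $G$ to its semisimple part $G'$, using Lemma~\ref{Le:con} and central lifts of the generators $a_\alpha$) is correct. But the heart of the proposition is precisely the semisimple case, and there your text is a plan rather than a proof: you explicitly defer (i) the productivity statement for a product of up to $\mathfrak{c}$ many distinct simple factors, saying ``one needs a genuinely infinite combinatorial device,'' and (ii) the transfer through the central kernel $C=Z(G')$, which you handle by requiring $C$ to be $d$-independent and then waving at the exceptional cases. Both deferrals hide real failures of the chosen route. For (ii), $C$ is very often finite (e.g.\ $G'=SU(2)$ gives $C=\mathbb{Z}(2)$) or trivial, so ``pairwise independent countable dense subgroups $C_\alpha$ of $C$'' simply do not exist, and the proposed fix of ``splitting off the finitely many simple factors responsible'' is not an argument. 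For (i), your insistence on \emph{free} subgroups $E_\alpha$ of $\prod_j L_j$ that split the central extension is an extra demand created by the direction you transfer in: you try to \emph{lift} dense subgroups from the adjoint quotient up to $G'$, which forces you to fight the extension. So the proposal, as written, does not prove the proposition.

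The paper's proof shows both obstacles are artifacts of the approach, and the key trick you missed is to abandon pairwise independent families (Lemma~\ref{Le:eq2}) and verify the definition directly for a single subgroup $H$ with $|H|<\mathfrak{c}$, transferring \emph{downward} along the covering $\pi\colon\prod_{i\in I}S_i\to G'$, whose kernel lies in $Z=\prod_i Z(S_i)$. Pull $AH\cap G'$ back to $P\le\prod_i S_i$ and project to each factor, getting $P_i$ with $|P_i|<\mathfrak{c}$ (this uses only that each $Z(S_i)$ is finite). Now apply the hypothesis in each factor to the \emph{enlarged} subgroup $P_iZ(S_i)$ -- still of size $<\mathfrak{c}$ -- to get countable dense $D_i\le S_i$ with $D_i\cap P_iZ(S_i)=\{1\}$; enlarging by $Z(S_i)$ is what absorbs the covering kernel, so no splitting, no freeness, and no $d$-independence of $C$ are ever needed. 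The product $D=\prod_i D_i$ is not countable, but it is \emph{separable} because $|I|\le\mathfrak{c}$ (Hewitt--Marczewski--Pondiczery), and $D\cap PZ=\{1\}$ is checked coordinatewise; one pushes $D$ forward, takes a countable dense subgroup $E\le\pi(D)$ only at the very end, and combines it with a dense cyclic $B\le A$ avoiding $HG'$ (Remark~\ref{re}) to get $L=BE$ with $H\cap L=\{1\}$. This same ``separable product, countable subgroup last'' device is exactly the ``infinite combinatorial device'' your productivity step was missing.
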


\begin{proof}
Let $H$ be a subgroup of a separable connected compact group $G$ with $|H|<\mathfrak{c}$.
By a well-known structure theorem for connected compact groups \cite[Theorem 9.24]{HM},
\[
G=AG',
\]
where $A$ is the identity component of the centre of $G$,
$G'$ is the derived subgroup of $G$, and
\[
\Delta:=A\cap G'
\]
is totally disconnected.

Moreover, there exists a family $\{S_i : i\in I\}$ of simple connected compact Lie groups
with $|I|\le w(G)\le \mathfrak{c}$, and a quotient homomorphism
\[
\pi:\prod_{i\in I} S_i \longrightarrow G'
\]
such that
\[
\ker\pi \le \pi^{-1}(\Delta) \le \prod_{i\in I} Z(S_i)=:Z.
\]

Let
\[
P=\pi^{-1}(AH\cap G'),
\]
and for each $i\in I$, let $P_i$ be the projection of $P$ onto $S_i$.
We claim that $|P_i|<\mathfrak{c}$ for every $i\in I$.

Indeed,
\[
P/Z \cong (AH\cap G')/\pi(Z)
\]
is a quotient group of
\[
(AH\cap G')/\Delta \cong AH/A,
\]
which has cardinality $<\mathfrak{c}$.
Since each $Z(S_i)$ is finite, it follows that
\[
P\big/\prod_{j\in I\setminus\{i\}} Z(S_j)
\]
also has cardinality $<\mathfrak{c}$.
As the kernel of the projection $P\to P_i$ contains
$\prod_{j\in I\setminus\{i\}} Z(S_j)$, the claim follows.
Hence $|P_i|<\mathfrak{c}$ for all $i$.

By assumption, for each $i\in I$ there exists a countable dense subgroup
$D_i\le S_i$ such that
\[
D_i\cap P_i Z(S_i)=\{1_{S_i}\}.
\]
Let
\[
D=\prod_{i\in I} D_i.
\]
Then $D$ is separable and
\[
D\cap PZ \le D\cap \prod_{i\in I} P_i Z(S_i)=\{1\}.
\]

Since
\[
\pi(D)\cap AH
=\pi\bigl(D\cap \pi^{-1}(AH\cap G')\bigr)
\le \pi(D\cap PZ),
\]
we obtain
\[
\pi(D)\cap AH=\{1_G\}.
\]
Fix a countable dense subgroup $E$ of $\pi(D)$.
If $A$ is trivial, then $H = A H$, and the result follows immediately.  
Henceforth, we assume that $A$ is nontrivial.

Next, consider the quotient homomorphism
\[
\psi:G\to G/G'\cong A/\Delta,
\]
where the isomorphism is given by $xG'=a\Delta$ whenever $x=ag$ with $a\in A$ and
$g\in G'$.
Then
\[
(HG'\cap A)/\Delta \cong HG'/G'
\]
has cardinality $<\mathfrak{c}$.
By Remark~\ref{re}, the group $A$ admits a dense cyclic subgroup $B$ such that
\[
B\cap HG'=\{1_G\}.
\]

Finally, let
\[
L=BE.
\]
Then $L$ is countable and dense in $G$.

Take $x=ag\in H\cap L$ with $a\in B\le A$ and $g\in E\le G'$.
Then
\[
a=xg^{-1}\in HG'\cap B=\{1_G\},
\quad
g=a^{-1}x\in AH\cap E=\{1_G\}.
\]
Hence $H\cap L=\{1_G\}$.
\end{proof}

Note that each $S_i$ is in fact \emph{simply connected}.
Hence, the above proposition reduces Question~\ref{q1} to the following.

\begin{question}
Is every simple, simply connected, compact Lie group $d$-independent?
\end{question}

Let us begin by recalling some elementary facts about Lie groups that will be used throughout this section.
A \emph{maximal torus} of a connected compact Lie group is a maximal subgroup that is topologically isomorphic to a finite power of the circle group $\mathbb{T}$.
It is well known that every element of a connected compact Lie group lies in some maximal torus, and that all maximal tori are conjugate to each other.

We first establish a linear-algebraic lemma, which will later be used to rule out coverings by “too few’’ proper subgroups.

\begin{lemma}\label{vector}
	Let $V$ be a real vector space of dimension $n$, where $1\le n\in\mathbb{N}$.
	Then $V$ cannot be covered by fewer than $\mathfrak{c}$ many subspaces of dimension at most $n-1$.
\end{lemma}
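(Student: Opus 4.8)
The plan is to identify $V$ with $\mathbb{R}^n$ after a choice of basis and, given any family of fewer than $\mathfrak{c}$ proper subspaces, to exhibit an explicit point of $V$ lying outside their union. Since every proper subspace of $V$ is contained in a hyperplane (extend a basis of the subspace to a basis of $V$ and drop one added vector), and a point avoiding an enlarged family automatically avoids the original one, I would first reduce to the case in which the given subspaces are hyperplanes $W_i=\ker\phi_i$ with $\phi_i\in V^*\setminus\{0\}$, indexed by a set $I$ with $|I|<\mathfrak{c}$. The case $n=1$ is trivial, since then the only subspace of dimension at most $n-1=0$ is $\{0\}$; so I would assume $n\ge 2$.

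The key device is the rational normal (moment) curve $\gamma\colon\mathbb{R}\to\mathbb{R}^n$ defined by $\gamma(t)=(1,t,t^2,\dots,t^{n-1})$. For $n\ge 2$ this map is injective, so its image consists of $\mathfrak{c}$ distinct points. The crucial observation is that each hyperplane meets this image in only finitely many points: writing $\phi_i$ as $(a_0,\dots,a_{n-1})\ne 0$, the condition $\gamma(t)\in W_i$ becomes $\sum_{k=0}^{n-1}a_k t^k=0$, a nonzero polynomial of degree at most $n-1$, which therefore has at most $n-1$ real roots. Hence $|\{t\in\mathbb{R}:\gamma(t)\in W_i\}|\le n-1$ for every $i$.

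It then remains to assemble a counting argument. The set of parameters $t$ for which $\gamma(t)$ lies in $\bigcup_{i\in I}W_i$ has cardinality at most $(n-1)\cdot|I|$, which equals $|I|$ when $I$ is infinite and is finite when $I$ is finite; in either case it is strictly smaller than $\mathfrak{c}$. Since $\mathbb{R}$ has cardinality $\mathfrak{c}$, there exists $t_0\in\mathbb{R}$ with $\gamma(t_0)\notin\bigcup_{i\in I}W_i$, giving a point of $V$ not covered by the family. This shows $V\ne\bigcup_{i\in I}W_i$, and hence that $V$ cannot be covered by fewer than $\mathfrak{c}$ subspaces of dimension at most $n-1$.

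I expect the main point to be the degree bound on $\gamma$ against a hyperplane, which is what forces each subspace to capture only finitely many points of the curve, together with the small piece of cardinal arithmetic $(n-1)\cdot|I|<\mathfrak{c}$; the rest is routine. An alternative, slightly longer route would avoid the explicit curve and instead induct on $n$: reduce to hyperplanes, fix $w\notin W_{i_0}$, use the induction hypothesis inside the hyperplane $W_{i_0}$ to choose $u\in W_{i_0}$ so that the affine line $\{u+tw:t\in\mathbb{R}\}$ lies in no $W_i$, and then count its $\mathfrak{c}$ points against the at-most-one intersection with each $W_i$.
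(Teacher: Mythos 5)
Your proof is correct, but it takes a genuinely different route from the paper's. The paper argues by induction on $n$: assuming $V=\bigcup_{i\in I}V_i$ with $|I|<\mathfrak{c}$ and $\dim V_i\le n-1$, it intersects the family with an arbitrary $(n-1)$-dimensional subspace $U$ and applies the inductive hypothesis to conclude that every such $U$ must be wholly contained in some $V_i$; since there are $\mathfrak{c}$ distinct hyperplanes but fewer than $\mathfrak{c}$ subspaces $V_i$, the pigeonhole principle forces some $V_i$ to contain two distinct hyperplanes, hence to have dimension at least $n$, a contradiction. Your argument is instead non-inductive and constructive: after the (correct) reduction to hyperplanes, the moment curve $\gamma(t)=(1,t,\dots,t^{n-1})$ supplies a $\mathfrak{c}$-sized set meeting each hyperplane $\ker\phi_i$ in at most $n-1$ points, since a nonzero polynomial of degree at most $n-1$ has at most $n-1$ real roots, and then the cardinal estimate $(n-1)\cdot|I|<\mathfrak{c}$ yields an explicit uncovered point. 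What your approach buys is a direct, coordinate-based witness with no induction; what the paper's buys is a purely lattice-theoretic argument that never chooses a basis or invokes polynomials. Both hinge on the same cardinal comparison of $|I|<\mathfrak{c}$ against $\mathfrak{c}$ many ``test objects'' (hyperplanes for the paper, curve points for you). Your alternative inductive sketch at the end (picking a line through a well-chosen point of a fixed hyperplane and counting its at-most-one intersection with each $W_i$) is closer in spirit to the paper's induction, but still differs: the paper pigeonholes over all hyperplanes of $V$ rather than intersecting a single affine line with the family.
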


\begin{proof}
	We argue by induction on $n=\dim V$.
	The case $n=1$ is trivial.
	
	Assume that the assertion holds for all vector spaces of dimension at most $n-1$, and let $\dim V=n$.
	Suppose, towards a contradiction, that there exists a family $\{V_i:i\in I\}$ of subspaces of $V$ such that $|I|<\mathfrak{c}$, $\dim V_i\le n-1$ for every $i\in I$, and
	\[
	V=\bigcup_{i\in I}V_i.
	\]
	
	Fix an arbitrary $(n-1)$-dimensional subspace $U$ of $V$, and define $U_i=V_i\cap U$.
	Then
	\[
	U=\bigcup_{i\in I}U_i.
	\]
	By the inductive hypothesis, there exists $i\in I$ such that $U\subseteq U_i$, hence $U\subseteq V_i$.
	
	Since there are $\mathfrak{c}$ many pairwise distinct $(n-1)$-dimensional subspaces of $V$, it follows that some $V_i$ contains at least two distinct $(n-1)$-dimensional subspaces.
	This implies $\dim V_i\ge n$, a contradiction.
\end{proof}

We next record a simple group-theoretic observation concerning intersections of cosets.

\begin{lemma}\label{Inter}
	Let $G$ be a group and let $A,B$ be subgroups of $G$.
	For any $g\in G$, if $A\cap gB\neq\emptyset$, then
	\[
	A\cap gB=a(A\cap B)
	\]
	for some $a\in A$.
\end{lemma}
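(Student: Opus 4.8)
The plan is to prove Lemma~\ref{Inter} by a direct coset-manipulation argument, which is genuinely elementary; the only real content is tracking how the intersection of a subgroup with a translate of another subgroup behaves. Since $A\cap gB\neq\emptyset$, I would first fix a witness: choose some $a\in A\cap gB$, so that $a\in A$ and $a=gb_0$ for some $b_0\in B$. The idea is that this single element $a$ controls the whole intersection, and the remaining task is to show that once we divide out by $a$, the intersection becomes the honest subgroup $A\cap B$.

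First I would establish the inclusion $A\cap gB\subseteq a(A\cap B)$. Take any $x\in A\cap gB$ and consider $a^{-1}x$. Since $a,x\in A$, we have $a^{-1}x\in A$. For the $B$-side, write $x=gb_1$ and recall $a=gb_0$ with $b_0,b_1\in B$; then $a^{-1}x=(gb_0)^{-1}(gb_1)=b_0^{-1}g^{-1}gb_1=b_0^{-1}b_1\in B$. Hence $a^{-1}x\in A\cap B$, which gives $x\in a(A\cap B)$, proving one inclusion.

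For the reverse inclusion $a(A\cap B)\subseteq A\cap gB$, take any $c\in A\cap B$ and consider $ac$. Clearly $ac\in A$ since both $a,c\in A$. To see $ac\in gB$, use $a=gb_0$ to write $ac=gb_0c$; as $b_0\in B$ and $c\in B$, we have $b_0c\in B$, so $ac\in gB$. Thus $ac\in A\cap gB$, completing the equality $A\cap gB=a(A\cap B)$.

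I expect no substantive obstacle here: the statement is a standard ``double coset meets subgroup'' fact and the argument is purely formal manipulation of group elements, carefully handling the left-versus-right placement of $g$ (so that the cancellation $g^{-1}g$ occurs correctly). The one point requiring minor care is that $a$ is chosen from the nonempty intersection rather than being arbitrary, so that both $a\in A$ and $a\in gB$ are available simultaneously; every other step is a one-line verification. There is nothing that needs the earlier machinery of the paper, so the proof stands alone.
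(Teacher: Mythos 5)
Your proof is correct and follows essentially the same route as the paper: both fix a witness $a\in A\cap gB$ and show that translating by $a^{-1}$ identifies the intersection with $A\cap B$. The paper merely compresses your two element-wise inclusions into the coset identities $A\cap gB=A\cap aB=aA\cap aB=a(A\cap B)$.
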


\begin{proof}
	Choose $a\in A\cap gB$.
	Then $aB=gB$, and therefore
	\[
	A\cap gB=A\cap aB=aA\cap aB=a(A\cap B).\qedhere
	\]
\end{proof}

Combining the previous two lemmas, we obtain the following covering result for connected Lie groups.
\begin{corollary}\label{Coro:coset}
	Let $G$ be a connected Lie group.
	If $\{H_i:i\in I\}$ is a (not necessarily injectively indexed) family of proper closed subgroups and if there exist elements $g_i\in G$ such that
	\[
	G=\bigcup_{i\in I}g_iH_i,
	\]
	then $|I|\ge\mathfrak{c}$.
\end{corollary}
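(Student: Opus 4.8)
The plan is to deduce this covering result for a connected Lie group $G$ from the two preceding lemmas by passing to the Lie algebra via the exponential map. The key geometric fact I would use is that a proper closed subgroup $H$ of a connected Lie group has dimension strictly less than $\dim G$ (if $\dim H=\dim G$ then $H$ would be open, hence closed-and-open, hence all of $G$ by connectedness), so its Lie algebra $\mathfrak{h}=\mathrm{Lie}(H)$ is a proper subspace of $\mathfrak{g}=\mathrm{Lie}(G)$. I would set $n=\dim G$ and work with the vector space $V=\mathfrak{g}\cong\mathbb{R}^n$.

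Concretely, first I would note that the statement is vacuous when $n=0$ (then $G$ is a point, has no proper closed subgroup, and an empty union cannot equal $G$), so assume $n\ge 1$. Suppose towards a contradiction that $G=\bigcup_{i\in I}g_iH_i$ with $|I|<\mathfrak{c}$. The idea is to transport this covering to a linear covering of $V$. Using $\exp\colon V\to G$, I would pull back each coset: for each $i$, consider the subspace $\mathfrak{h}_i=\mathrm{Lie}(H_i)\le V$, which has $\dim\mathfrak{h}_i\le n-1$ since $H_i$ is proper. The delicate point is that a translated coset $g_iH_i$ does not pull back to a \emph{subspace} under $\exp$ — this is exactly where Lemma~\ref{Inter} is needed. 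Rather than pull back cosets directly, I would instead argue on the level of subgroups: intersect the covering with a well-chosen subset of $G$ on which $\exp$ is a local diffeomorphism.

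The cleanest route, which I would adopt, is to exploit a small normal neighborhood. Let $U$ be a neighborhood of $1$ in $G$ on which $\exp$ restricts to a diffeomorphism from a ball $B\subseteq V$; every element of $U$ lies in some $g_iH_i$. For each coset meeting $U$, Lemma~\ref{Inter} shows $U\cap g_iH_i$ is (a piece of) a single left coset $aH_i$ of $H_i$, and near $1$ this corresponds under $\exp^{-1}$ to an affine piece of the translate of the proper subspace $\mathfrak{h}_i$. Shrinking $B$ so that these affine slices through $B$ are honest intersections of $B$ with finitely-many-dimensional affine subspaces $a_i+\mathfrak{h}_i$, I would obtain that the ball $B$, and hence a full neighborhood of $0$ in $V$, is covered by fewer than $\mathfrak{c}$ affine subspaces each of dimension $\le n-1$. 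Since a ball spans $V$ and an affine subspace of dimension $\le n-1$ meets it in a set contained in a translate of an $(n-1)$-dimensional linear subspace, a standard rescaling reduces this to covering $V$ itself (or equivalently the projective space) by fewer than $\mathfrak{c}$ subspaces of dimension $\le n-1$, contradicting Lemma~\ref{vector}.

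The main obstacle is precisely the translation: Corollary~\ref{Coro:coset} allows arbitrary translates $g_iH_i$, and the exponential map does not send cosets to affine subspaces globally, so the whole argument must be localized to a chart where $\exp$ is a diffeomorphism, and Lemma~\ref{Inter} must be invoked to guarantee that each nonempty local intersection $U\cap g_iH_i$ is governed by a single subgroup $H_i$ (hence a single proper subspace $\mathfrak{h}_i$) rather than by an uncontrolled union. Once the covering of a neighborhood of $0\in V$ by $<\mathfrak{c}$ proper affine subspaces is established, passing to linear subspaces and applying Lemma~\ref{vector} to derive the contradiction $\dim V_i\ge n$ is routine.
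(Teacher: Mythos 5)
There is a genuine gap, and it sits at the very center of your argument: the claim that, near the identity, a piece of a translated coset $g_iH_i$ corresponds under $\exp^{-1}$ to an \emph{affine} piece $a_i+\mathfrak{h}_i$ of the proper subspace $\mathfrak{h}_i$. This is false for non-abelian $G$. What is true (Cartan's theorem on closed subgroups) is that $\exp_G^{-1}(H_i)$ coincides with $\mathfrak{h}_i$ near $0$ --- but only for the coset through the identity. A translated coset $aH_i$ pulls back to the image of a piece of $\mathfrak{h}_i$ under the map $X\mapsto \exp_G^{-1}\bigl(a\exp_G(X)\bigr)$, and left translation is not linear in exponential coordinates: for instance, in $SO(3)$ with $H$ a maximal torus, a coset $aH$ with $a\notin H$ is, in angle--axis coordinates, a curved arc, not a line segment. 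Your invocation of Lemma~\ref{Inter} at this point is also misplaced: that lemma requires the first set $A$ to be a \emph{subgroup}, whereas your $U$ is merely a neighborhood; the conclusion you extract from it (that $U\cap g_iH_i$ lies in a single coset) is trivially true and provides no linearization. Moreover, the gap is not repairable by weakening "affine pieces" to "lower-dimensional submanifolds": it is consistent with ZFC that $\mathbb{R}^n$ is covered by fewer than $\mathfrak{c}$ closed nowhere dense (even measure-zero) sets, so no purely metric or dimensional smallness argument can yield the bound $|I|\ge\mathfrak{c}$; the linear (or at least algebraic) structure of the covering pieces is essential, and it is exactly what your localization destroys.

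The paper circumvents this by never linearizing cosets at all; instead it keeps the group structure in play along one-parameter subgroups. Fix $0\neq X\in\mathfrak{g}$ and set $A=\exp_G(\mathbb{R}X)$, a subgroup covered by the fewer than $\mathfrak{c}$ cosets $g_iH_i$; by cardinality some intersection $A\cap g_iH_i$ is uncountable, and Lemma~\ref{Inter} --- applied with the subgroup $A$, its intended use --- shows this intersection is a translate $a(A\cap H_i)$, which corresponds to a coset $t+D$ of an uncountable, hence dense, subgroup $D\le\mathbb{R}$. Since $g_iH_i$ is closed, density forces $\exp_G(\mathbb{R}X)\subseteq g_iH_i$, whence $g_iH_i=H_i$ and $X\in\mathfrak{h}_i$. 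Thus $\mathfrak{g}=\bigcup_{i\in I}\mathfrak{h}_i$ is a covering by genuine linear subspaces, and Lemma~\ref{vector} gives the contradiction. If you want to salvage your outline, replace the "affine slices in a chart" step by this dichotomy: each coset meets each one-parameter subgroup either in a countable set or, being closed, contains it entirely.
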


\begin{proof}
	Assume $|I|<\mathfrak{c}$.
	Let $\mathfrak{g}$ be the Lie algebra of $G$, and denote by $\exp_G:\mathfrak{g}\to G$ the exponential map.
	
	For each $i\in I$, set
	\[
	\mathfrak{h}_i=\{X\in\mathfrak{g}:\exp_G(\mathbb{R}X)\subseteq H_i\}.
	\]
	Then $\mathfrak{h}_i$ is the Lie algebra of $H_i$ (cf. \cite[Section 8.3]{HN}).
	Since $H_i$ is proper, $\mathfrak{h}_i\ne\mathfrak{g}$.
	
	Fix a non-zero element $X\in\mathfrak{g}$.
	Since
	\[
	\exp_G(\mathbb{R}X)\subseteq \bigcup_{i\in I}g_iH_i
	\]
	and $|I|<\mathfrak{c}$, there exists $i\in I$ such that
	$\exp_G(\mathbb{R}X)\cap g_iH_i$ is uncountable.
	
	By Lemma~\ref{Inter},
	\[
	\exp_G(\mathbb{R}X)\cap g_iH_i=\exp_G((t+D)X)
	\]
	for some $t\in\mathbb{R}$ and some uncountable subgroup $D\le\mathbb{R}$.
	Since $t+D$ is dense in $\mathbb{R}$ and $g_iH_i$ is closed, we conclude that
	\[
	\exp_G(\mathbb{R}X)\subseteq g_iH_i,
	\]
	hence $g_iH_i=H_i$ and $X\in\mathfrak{h}_i$.
	
	Thus,
	\[
	\mathfrak{g}=\bigcup_{i\in I}\mathfrak{h}_i,
	\]
	contradicting Lemma~\ref{vector}.
\end{proof}

We now apply the preceding results to conjugacy classes of elements in compact Lie groups.

\begin{lemma}\label{Le:coset}
	Let $G$ be a connected compact Lie group.
	If $x\in G$ is non-central and $T$ is a maximal torus of $G$, then
	\[
	X:=\{g\in G:\; gxg^{-1}\in T\}
	\]
	is a finite union of cosets of some proper closed subgroup of $G$.
\end{lemma}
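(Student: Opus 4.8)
The plan is to exploit the fact that maximal tori control the conjugacy structure of a connected compact Lie group. Since $x$ is non-central, its conjugacy class is non-trivial, and the set $X=\{g\in G: gxg^{-1}\in T\}$ records exactly which group elements conjugate $x$ into the fixed maximal torus $T$. The first step is to understand $X$ as a union of cosets. I would fix one element $g_0\in X$, which exists because every element of $G$ lies in some maximal torus and all maximal tori are conjugate, so some conjugate of $x$ lands in $T$. Replacing $x$ by $g_0 x g_0^{-1}$, one may assume $x\in T$ from the outset, so that $1\in X$.

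\textbf{Identifying the relevant subgroup.}
The key observation is that $X$ is a union of left cosets of the centralizer $C_G(x)=\{g\in G: gxg^{-1}=x\}$: if $g\in X$, then $gxg^{-1}=t\in T$, and for any $c\in C_G(x)$ we have $(gc)x(gc)^{-1}=g(cxc^{-1})g^{-1}=gxg^{-1}=t\in T$, so $gC_G(x)\subseteq X$. Hence $X$ is a union of cosets of $C_G(x)$. The essential point is then to show this union is \emph{finite}. Two conjugates $gxg^{-1}$ and $g'xg'^{-1}$ lying in $T$ are conjugate elements of $T$ that are also conjugate in $G$; by the standard theory, elements of $T$ are $G$-conjugate if and only if they lie in the same orbit of the Weyl group $W=N_G(T)/T$, which is finite. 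Thus the possible values $gxg^{-1}\in T$ as $g$ ranges over $X$ form a finite set, namely the intersection of the $G$-conjugacy class of $x$ with $T$, which is a single Weyl-orbit and hence has at most $|W|$ elements. For each such value $t_k$, the preimage $\{g: gxg^{-1}=t_k\}$ is a single coset of $C_G(x)$, so $X$ is a union of at most $|W|$ cosets of $C_G(x)$.

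\textbf{Properness and closedness.}
It remains to check that $C_G(x)$ is a proper closed subgroup. Closedness is clear, since $C_G(x)$ is the preimage of $\{x\}$ under the continuous map $g\mapsto gxg^{-1}$. For properness, the crucial hypothesis is that $x$ is non-central: if $C_G(x)=G$, then $x$ would commute with every element of $G$, i.e.\ $x\in Z(G)$, contradicting the assumption. Therefore $C_G(x)$ is a proper closed subgroup, and $X$ is a finite union of its cosets, as required. I would phrase the final statement in terms of this centralizer, noting that after the initial reduction $X$ is a finite union of left cosets of $C_G(x)$, each of the form $g_kC_G(x)$ where $g_k x g_k^{-1}$ ranges over the finite set $\{t\in T: t\text{ is }G\text{-conjugate to }x\}$.

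\textbf{Anticipated obstacle.}
The main technical point I expect to require care is the claim that the conjugacy class of $x$ meets $T$ in only finitely many points. This rests on the theorem that two elements of a maximal torus are conjugate in $G$ precisely when they lie in the same Weyl-group orbit, together with the finiteness of $W=N_G(T)/T$. One must be slightly careful to invoke this in the correct direction: the finiteness I need is that $\{gxg^{-1}: g\in X\}\subseteq T$ is contained in a single Weyl-orbit and is therefore finite. Once this is in hand, partitioning $X$ according to the finitely many values of $gxg^{-1}$ and recognizing each fiber as a single $C_G(x)$-coset completes the argument cleanly.
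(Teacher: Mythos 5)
Your proof is correct and follows essentially the same route as the paper's: both reduce to the case $x\in T$, invoke the finiteness of the Weyl group together with the fact that $G$-conjugate elements of $T$ are conjugate under $N_G(T)$, and express $X$ as finitely many left cosets of the proper closed centralizer $Z_G(x)$. The only cosmetic difference is that you index the cosets by the finitely many points of the conjugacy class of $x$ lying in $T$, whereas the paper indexes them by a finite set of Weyl-group coset representatives.
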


\begin{proof}
	We first treat the case where $x\in T$.
	
	Let
	\[
	N_G(T)=\{g\in G:\; gT=Tg\}
	\]
	be the normalizer of $T$ in $G$, and let
	\[
	W(G,T):=N_G(T)/T
	\]
	denote the (analytic) Weyl group of $G$ relative to $T$.
	It is well known that $W(G,T)$ is finite
	(cf.\ \cite[Theorem~6.36(e)]{Sep}).
	Hence there exists a finite subset $F\subseteq N_G(T)$ such that
	\[
	N_G(T)=FT.
	\]
	
	Let $g\in X$.
	Since both $gxg^{-1}$ and $x$ belong to $T$, it follows from
	\cite[Theorem~6.36(f)]{Sep} that there exists $h\in F$ such that
	\[
	hxh^{-1}=gxg^{-1}.
	\]
	Equivalently,
	\[
	h^{-1}gx=xh^{-1}g,
	\]
	which means that
	\[
	h^{-1}g\in Z_G(x):=\{y\in G:\; xy=yx\}.
	\]
	
	Since $x$ is non-central, its centralizer $Z_G(x)$ is a proper closed
	subgroup of $G$.
	Consequently,
	\[
	X=\bigcup_{h\in F} hZ_G(x)
	\]
    is the union of finite cosets of the proper closed subgroup $Z_G(a)$.	
	
	We now consider the general case.
	Choose $h\in G$ such that $x\in hTh^{-1}=:S$, which is again a maximal torus of $G$.
	Then
	\[
	gxg^{-1}\in T \quad\text{if and only if}\quad (hg)x(hg)^{-1}\in S.
	\]
	Applying the above argument to the maximal torus $S$, we see that the set $hX$
	is a finite union of cosets of the proper closed subgroup $Z_G(x)$.
	It follows that $X$ itself has the same property.
\end{proof}

We are now ready to establish the main result of this section, showing that
$d$-independence holds for all connected compact non-abelian Lie groups.

\begin{proposition}\label{noch}
	Every connected compact non-abelian Lie group is $d$-independent.
\end{proposition}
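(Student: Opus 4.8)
The plan is to verify the definition of $d$-independence directly: given an arbitrary subgroup $S\le G$ with $|S|<\mathfrak{c}$, I will produce a countable dense subgroup $D\le G$ with $D\cap S=\{1\}$ (equivalently, by Lemma~\ref{Le:eq2}, one may instead manufacture $\mathfrak{c}$ pairwise independent countable dense subgroups). Fix a maximal torus $T\le G$. Since $G$ is a compact Lie group it is second countable, so $T$ is a separable connected compact abelian group, and Lemma~\ref{Le:con} supplies an independent family $\{H_\alpha:\alpha<\mathfrak{c}\}$ of dense cyclic subgroups of $T$. As the $H_\alpha$ are independent, each nontrivial element of $S\cap T$ lies in at most one of them, so for all but fewer than $\mathfrak{c}$ indices one has $H_\alpha\cap S=\{1\}$; fix such an $\alpha$ and a topological generator $a$ of $H_\alpha=\langle a\rangle$, so that $\langle a\rangle\cap S=\{1\}$. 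Since $a$ generates a dense subgroup of $T$, the centralizer of $a$, and of each nonzero power $a^{m}$, equals $Z_G(T)=T$. The subgroup $D$ will be assembled from conjugates of $\langle a\rangle$.

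I first dispatch density. Because every element of $G$ lies in some maximal torus and all maximal tori are conjugate, the conjugates of $T$ cover $G$ and hence generate it; moreover, if $\{g_n:n\in\omega\}$ is dense in $G$, then $\overline{\langle\bigcup_n g_nTg_n^{-1}\rangle}=G$, since this closed subgroup contains every limit $gtg^{-1}=\lim_k g_{n_k}tg_{n_k}^{-1}$ and therefore every conjugate of $T$. Consequently, writing $A_n=g_n\langle a\rangle g_n^{-1}$, which is dense in $g_nTg_n^{-1}$, the countable subgroup $D=\langle\bigcup_n A_n\rangle$ is automatically dense in $G$, provided only that the conjugators $g_n$ form a dense subset of $G$.

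Next I arrange that each $A_n$ misses $S$. For a non-central $s\in S$, Lemma~\ref{Le:coset} shows that $\{g:g^{-1}sg\in T\}$ is a finite union of cosets of the proper closed subgroup $Z_G(s)$, so $s\in g_nTg_n^{-1}$ only when $g_n$ lies in one of these cosets. Ranging over $S\setminus Z(G)$ produces fewer than $\mathfrak{c}$ such cosets, and the argument proving Corollary~\ref{Coro:coset} in fact shows that fewer than $\mathfrak{c}$ cosets of proper closed subgroups cannot cover any nonempty open subset of $G$; hence the conjugators avoiding all of them form a \emph{dense} set $W\subseteq G$. Choosing the $g_n$ from $W$ guarantees simultaneously that $D$ is dense and that each $g_nTg_n^{-1}$ meets $S$ only in central elements. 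Finally $A_n\cap S=\{1\}$: a nontrivial element $g_na^{m}g_n^{-1}$ of $A_n$ is either central, in which case it equals $a^{m}\in\langle a\rangle$ and so avoids $S$ because $\langle a\rangle\cap S=\{1\}$, or non-central, in which case it lies in the torus $g_nTg_n^{-1}$ and is avoided by the choice of $g_n\in W$.

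The heart of the matter — and the step I expect to be the main obstacle — is the passage from ``each conjugate torus avoids $S$'' to ``the whole group $D$ avoids $S$'', since a nontrivial element of $D$ is a word alternating among the $A_n$ and need not lie in any single conjugate torus. I would attack this by building $D=\langle c_1,c_2,\dots\rangle$, with $c_n=g_na g_n^{-1}$, as an increasing union $D_1\subseteq D_2\subseteq\cdots$, adjoining the conjugators one at a time from the dense set $W$ while maintaining $D_k\cap S=\{1\}$. When $g_k$ is adjoined, the new obstructions are the equations $w=s$ with $s\in S$ and $w$ a word in $c_1,\dots,c_k$ in which $c_k$ occurs. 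For words in which $c_k$ occurs exactly once, say $uc_k^{\,m}v=s$ with $u,v\in D_{k-1}$, the condition reads $g_ka^{m}g_k^{-1}=u^{-1}sv^{-1}$, whose solution set in $g_k$ is either empty or a single coset of $Z_G(a^{m})=T$, a proper closed subgroup; there are only countably many such words and fewer than $\mathfrak{c}$ elements $s$, so by Corollary~\ref{Coro:coset} these coset obstructions still leave a dense set of admissible $g_k$. The genuinely delicate point is to control the words in which $c_k$ occurs several times, whose solution loci in $g_k$ are no longer cosets and for which the covering lemmas do not apply verbatim. Here I expect to need an additional genericity of the conjugators — arranging, through a further use of Corollary~\ref{Coro:coset} and Lemma~\ref{vector} together with the freedom built into the independent family of Lemma~\ref{Le:con}, that the generators $c_n$ are placed in sufficiently general position that no higher word can coincide with an element of $S$. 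Making this last reduction precise is, I anticipate, the crux of the whole argument.
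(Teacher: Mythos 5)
Your construction stalls exactly where you say it does, and that gap is fatal to this line of attack: once you generate $D$ from separately conjugated cyclic groups $A_n=g_n\langle a\rangle g_n^{-1}$, a nontrivial element of $D$ is a word alternating among the $A_n$, and nothing forces such a word to lie in any conjugate torus, so the coset-avoidance machinery (Lemma~\ref{Le:coset} together with Corollary~\ref{Coro:coset}) only controls the words in which the newly adjoined generator occurs once. The loci in $g_k$ cut out by words with two or more occurrences of $c_k$ are not cosets of closed subgroups but genuine algebraic subvarieties of $G$, so neither Lemma~\ref{vector} nor Corollary~\ref{Coro:coset} applies to them; and no amount of ``general position'' of the $c_n$ extracted from Lemma~\ref{Le:con} will repair this, since what genericity buys you is freeness of the group generated by the $c_n$, and freeness by itself does not prevent a long word in the generators from landing in $S$. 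So the proposal is incomplete at its acknowledged crux, and that crux is the whole difficulty.

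The paper's proof sidesteps the word problem entirely by a change of strategy: instead of assembling a dense subgroup out of many independently conjugated pieces, it starts from a countable dense subgroup that already exists \emph{as a group} and moves it by a \emph{single} conjugation. By \cite[Corollary~6.87]{HM}, $G$ has a dense free subgroup $F$ of rank~$2$. Cover all of $S$ by fewer than $\mathfrak{c}$ conjugate maximal tori, $S\subseteq\bigcup_{c\in C}cTc^{-1}$. Every nontrivial $a\in F$ is non-central (non-abelian free groups have trivial centre), so by Lemma~\ref{Le:coset} each condition $gag^{-1}\in cTc^{-1}$ confines $g$ to a finite union of cosets of a proper closed subgroup; ranging over the countably many $a\in F\setminus\{1\}$ and the fewer than $\mathfrak{c}$ many $c\in C$ produces fewer than $\mathfrak{c}$ cosets, which cannot cover $G$ by Corollary~\ref{Coro:coset}. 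Any $g$ outside this union yields $gFg^{-1}\cap\bigcup_{c\in C}cTc^{-1}=\{1\}$, hence $gFg^{-1}\cap S=\{1\}$, and $gFg^{-1}$ is countable and dense. The two ideas your approach misses are: (i) conjugating one fixed dense subgroup by one element keeps every nontrivial element of $D$ of the form $gag^{-1}$ with $a$ ranging over a \emph{fixed} countable set, so every constraint on $g$ remains a coset constraint; and (ii) one should avoid the whole union of conjugate tori covering $S$, rather than $S$ itself, which is precisely what makes Lemma~\ref{Le:coset} applicable to each of those constraints.
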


\begin{proof}
	Let $G$ be a connected compact non-abelian Lie group, and let $T$ be a maximal torus of $G$.
	By \cite[Corollary~6.87]{HM}, the group $G$ admits a dense free subgroup $F$ generated by two elements.
	Clearly, for every $g\in G$, the conjugate subgroup $gFg^{-1}$ is again a dense free subgroup of $G$.
	
	The key point is to show that one can choose such a conjugate so that it avoids
	any prescribed “small’’ subset of $G$.
	
	\begin{claim}\label{cl1}
		For every subset $C\subseteq G$ with $|C|<\mathfrak{c}$, the set
		\[
		\Bigl\{g\in G:\; gFg^{-1}\cap \bigcup_{c\in C} cTc^{-1}\neq \{1\}\Bigr\}
		\]
		is a proper subset of $G$.
	\end{claim}
	
	\begin{proof}[Proof of Claim~\ref{cl1}]
		We may write
		\[
		\Bigl\{g\in G:\; gFg^{-1}\cap \bigcup_{c\in C} cTc^{-1}\neq \{1\}\Bigr\}
		=
		\bigcup_{a\in F\setminus\{1\}}\;
		\bigcup_{c\in C}
		\{g\in G:\; gag^{-1}\in cTc^{-1}\}.
		\]
		
		Fix $a\in F\setminus\{1\}$ and $c\in C$.
		By Lemma~\ref{Le:coset}, the set
		\[
		\{g\in G:\; gag^{-1}\in cTc^{-1}\}
		\]
		is a finite union of cosets of some proper closed subgroup of $G$.
		Consequently, the entire set in question is a union of fewer than $\mathfrak{c}$
		many cosets of (possibly different) proper closed subgroups of $G$.
		
		Note that $F$ is centre-free, hence no $a\in F\setminus\{1\}$ can be central in $G$. By Corollary~\ref{Coro:coset}, such a union cannot cover $G$.
		Hence the above set is a proper subset of $G$, as claimed.
	\end{proof}
	
	Now let $H$ be an arbitrary subgroup of $G$ with $|H|<\mathfrak{c}$.
	Since every element of $G$ is contained in some maximal torus and all maximal tori
	are conjugate, there exists a subset $C\subseteq G$ with $|C|<\mathfrak{c}$ such that
	\[
	H\subseteq \bigcup_{c\in C} cTc^{-1}.
	\]
	
	By Claim~\ref{cl1}, the set
	\[
	\Bigl\{g\in G:\; gFg^{-1}\cap \bigcup_{c\in C} cTc^{-1}\neq \{1\}\Bigr\}
	\]
	is a proper subset of $G$.
	Therefore, we may choose $g\in G$ such that
	\[
	gFg^{-1}\cap \bigcup_{c\in C} cTc^{-1}=\{1\}.
	\]
	
	Since $H$ is contained in $\bigcup_{c\in C} cTc^{-1}$, it follows that
	\[
	gFg^{-1}\cap H=\{1\}.
	\]
	Finally, note that $gFg^{-1}$ is countable and dense in $G$.
	This shows that $G$ is $d$-independent.
\end{proof}

Combining Propositions \ref{nonAb} and \ref{noch}, we obtain the main result:
\begin{theorem}
	Every separable connected compact group is $d$-independent.
\end{theorem}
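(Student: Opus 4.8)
The plan is to deduce the theorem directly from the two structural results established above, namely Proposition~\ref{nonAb} and Proposition~\ref{noch}. By Proposition~\ref{nonAb}, the whole problem reduces to verifying that every \emph{simple} connected compact Lie group is $d$-independent. Thus the only remaining task is a short case analysis, according to whether such a group is abelian or not.

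First I would observe that a connected compact Lie group with no proper non-trivial connected closed normal subgroup falls into exactly two types. If it is non-abelian, then Proposition~\ref{noch} applies verbatim and yields $d$-independence. If it is abelian, then it is a connected compact abelian Lie group, i.e.\ a torus $\mathbb{T}^n$; but any torus of dimension $n\ge 2$ contains proper non-trivial connected closed subgroups, so the only abelian possibility left is the circle group $\mathbb{T}$ itself.

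It then remains to treat the circle. Here I would invoke Corollary~\ref{unbounded}: the group $\mathbb{T}$ is a separable compact abelian group and it is unbounded, since $n\mathbb{T}=\mathbb{T}$ for every $n\ge 1$, so it is $d$-independent. Combining the two cases shows that every simple connected compact Lie group is $d$-independent, and Proposition~\ref{nonAb} then completes the argument.

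I do not expect any serious obstacle in this final step, since all the genuine difficulty has already been absorbed into Propositions~\ref{nonAb} and~\ref{noch}. The only point that requires a little care is to ensure that the hypothesis of Proposition~\ref{nonAb}, phrased for \emph{all} simple connected compact Lie groups, is actually met, including the degenerate abelian case $\mathbb{T}$; this is why the brief verification for the circle is recorded. In fact, the simple factors $S_i$ produced by the structure theorem for the semisimple part $G'$ are always non-abelian (indeed simply connected, as noted after Proposition~\ref{nonAb}), so in the genuine application only Proposition~\ref{noch} is invoked, but including the circle case makes the reduction clean and self-contained.
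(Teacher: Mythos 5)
Your proposal is correct and takes essentially the same route as the paper, whose entire proof is to combine Propositions~\ref{nonAb} and~\ref{noch}. Your additional verification for the circle $\mathbb{T}$ via Corollary~\ref{unbounded} patches a detail the paper leaves implicit: under the paper's definition of ``simple'' the abelian group $\mathbb{T}$ qualifies, while Proposition~\ref{noch} covers only non-abelian Lie groups, and the paper silently relies on the remark that the factors $S_i$ arising in Proposition~\ref{nonAb} are simply connected (hence non-abelian) --- so your version is, if anything, more self-contained.
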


\section*{Acknowledgements}
The authors express sincere gratitude to Professor Wei He for insightful discussions. This work was supported by NSFC Grants 12301089 and 12271258.

\end{document}